\let\oldmarginpar\marginpar
\renewcommand\marginpar[1]{\-\oldmarginpar[\raggedleft\footnotesize #1]%
{\raggedright\footnotesize #1}}
\newtheorem{Theorem}{Theorem}[section]
\newtheorem{Proposition}[Theorem]{Proposition}
\newtheorem{Corollary}[Theorem]{Corollary}
\newtheorem{Lemma}[Theorem]{Lemma}
\def\proof{\par{\it Proof}. \ignorespaces}
\def\endproof{{\ \vbox{\hrule\hbox{%
     \vrule height1.3ex\hskip0.8ex\vrule}\hrule }}\par}
\newenvironment{Proof}{\proof}{\endproof}
\theoremstyle{definition}
\newtheorem{Definition}{Definition}[section]
\newtheorem{Example}{Example}[section]
\newtheorem{Conjecture}{Conjecture}[section]
\theoremstyle{remark}
\numberwithin{equation}{section}
\let\trueint=\int
\let\truesum=\sum
\def\int{\mathop{\textstyle\trueint}\limits}
\def\sum{\mathop{\textstyle\truesum}\limits}
\def\Real{{\mathbb{R}}}
\def\Gr{{\rm Gr}}
\def\SL{{\rm SL}}
\begin{document}
 
\title[Cohomology of real Grassmannians]
{On the cohomology of real Grassmann manifolds}

\author{Luis Casian}
\address{Department of Mathematics, Ohio State University, Columbus,
OH 43210}
\email{casian@math.ohio-state.edu}
\author{Yuji Kodama$^*$}
\thanks{$^*$Partially
supported by NSF grant DMS-1108813.}

\address{Department of Mathematics, Ohio State University,
Columbus, OH 43210}
\email{kodama@math.ohio-state.edu}

\keywords{}

\begin{abstract}
We give an explicit and simple construction of  the incidence graph for the integral
cohomology of {\it real} Grassmann manifold $\Gr(k,n)$
in terms of the Young diagrams filled with the letter $q$ in checkered pattern. 
It turns out that there are two types of graphs, one for the trivial coefficients and other for the twisted
coefficients, and they compute the homology groups of the orientable and non-orientable cases of $\Gr(k,n)$ via the Poincar\'e-Verdier duality.
We also give  an explicit formula of the Poincar\'e polynomial
for $\Gr(k,n)$ and show that the Poincar\'e polynomial is also related to the number of points
on $\Gr(k,n)$ over a finite field $\mathbb{F}_q$ with $q$ being a power of prime
which is also used in the Young diagrams. 
\end{abstract}

\maketitle

\thispagestyle{empty}
\pagenumbering{arabic}\setcounter{page}{1}
\tableofcontents


\section{Introduction}
The Grassmann manifolds (sometimes, referred to as simply the Grassmannian) which  parametrize vector subspaces of  fixed dimensions  of a given vector space are
 fundamental objects, and appear in many
areas of mathematics. Their study  dates back to Pl\"ucker  who, in the 19th century,  considered  the case  of  vector subspaces
of dimension $k=2$ of a space of dimension $n=4$.  In this note we study the integral cohomology  of {\em real} Grassmann manifolds.
The cohomology of {\em complex} Grassmann manifolds is well-known but the real case poses additional challenges. We give
an explicit  description of the Betti numbers and the incidence graph which leads to the integral cohomology.

There is a well-known  decomposition of a real Grassmann manifold into {\em Schubert cells}. The Schubert  cells can be  parametrized  with Young diagrams
and these Young diagrams will then also appear in a co-chain complex to calculate the cohomology.  In this note  the Young diagrams are filled in 
with a letter $q$ in a checkered pattern and this gives rise to a power of $q$ for each Young diagram and therefore to a polynomial by considering certain \emph{alternating} sum. This polynomial is computed explicitly 
and, it is shown to  contain the information of all the Betti numbers of the real Grassman manifolds.  That is, it gives the Poincar\'e polynomial of the manifold.
There is also an alternative construction of these polynomials  in terms of  varieties over a finite field ${\mathbb F}_q$
where this time $q$ is a power of a prime number.  One can consider  varieties over  ${\mathbb F}_q$ that naturally correspond to  the {\em real} Graassmann manifolds and then count
the number of  ${\mathbb F}_q$ points. This again gives rise to  a polynomial in $q$  which agrees
with the polynomial that was obtained directly from  the Young diagrams.

The results in this paper were initially motivated by considering real Grassmann manifolds in terms of the KP hierarchy \cite{CK10}.

\section{The real Grassmann manifold $\Gr(k,n)$}
The real Grassmannian $\Gr(k,n)$ is defined by 
\[
\Gr(k,n):=\{\text{the set of~all~}k\text{-dimensional subspaces in}~ \mathbb{R}^n\}.
\]


Let $\{e_j:j=1,\ldots,n\}$ be a basis of $\mathbb{R}^n$, and $\{f_i:i=1,\ldots,k\}$
a basis of $k$-dimensional subspace $V_k(e_1,\ldots,e_n)$. Then there exists a full rank $k\times n$ matrix
$A$ such that we have
\[
(f_1,\ldots,f_k)=(e_1,\ldots, e_n) A^T,
\]
where $A^T$ is the transpose of the matrix $A$.  Since the left action of $\text{GL}_k(\mathbb{R})$
on $A$ does not change the subspace $V_k$, $\Gr(k,n)$ can be expressed as
\[
\Gr(k,n)\cong \text{GL}_k(\mathbb{R})\setminus M_{k\times n}(\mathbb{R}),
\]
where $M_{k\times n}(\mathbb{R})$ is the set of all $k\times n$ matrices of rank $k$.
The $\text{GL}_k(\mathbb{R})$-action puts $A$ in the canonical form called the reduced row echelon form ({RREF}).  For example, the RREF for the generic matrix has the form,
\[
A=\begin{pmatrix}
* & \cdots & * & 1 &  \cdots &0\\
\vdots & \ddots &\vdots&\vdots&\ddots&\vdots\\
*&\cdots&*&0&\cdots&1
\end{pmatrix} ~\in~\Gr(k,n).
\]
The {1}'s in the matrix are called the pivots of the RREF. In general, each $k$-element subset $I=\{i_1,\ldots,i_k\}$ of $[n]=\{1,2,\ldots,n\}$ is called the {\it Schubert symbol} and represents the index set of the pivots for the matrix $A$.  Then the set of all $k$-element subsets  $\binom{[n]}{k}$ gives a parametrization for the Schubert decomposition of $\Gr(k,n)$, i.e.
\[
\Gr(k,n)=\bigsqcup_{I\in\binom{[n]}{k}}\Omega^\circ_{I}
\]
where $\Omega^\circ_I$ is the Schubert cell, and $A\in \Omega^\circ_I$ means that the RREF of $A$ has the pivot index $I$.

\subsection{The Schubert decomposition of $\Gr(k,n)$ and the Bruhat poset $S_n^{(k)}$}
Let $S_n^{(k)}$ denote the set of minimal coset representatives defined by (see \cite{bjorner:05})
\[
S_n^{(k)}:=\{t\in S_n:~\ell(ts)\ge \ell(t),~\forall s\in P_k\},
\]
where $\ell(s)$ is the length of $s$, and $P_k$ is the maximal parabolic subgroup 
generated  by 
\[
P_k:=\langle s_1,\ldots,\hat{s}_k,\ldots, s_{n-1}\rangle \cong S_k\times S_{n-k}.
\]
Since  there is a bijection between $\binom{[n]}{k}$ and $S_n^{(k)}$,
the Schubert decomposition can be also expressed in terms of $S_n^{(k)}$, i.e.
\[
\Gr(k,n)=\bigsqcup_{w\in S_n^{(k)}}\Omega^\circ_w,
\]
where we identify $\Omega_I^\circ=\Omega^\circ_w$.  More precisely, we have $
I=\{i_1,i_2,\ldots,i_k\}=w\cdot\{1,2,\ldots,k\}$, i.e. $i_j=w(j)$.
Note that the dimension of $\Omega^\circ_w$ is given by dim$(\Omega^\circ_w)=\ell(w)$.

We label the Schubert cell $\Omega^\circ_w$ for $w\in S_n^{(k)}$ by the Young diagram $Y_w$. Given a Schubert symbol $\{w(1)<\cdots<w(k)\}$, first define
$\lambda:=(\lambda_1,\ldots,\lambda_r)$ with $\lambda_j=w(j)-j$
(note $\lambda_1\le\cdots\le \lambda_r$) and $|\lambda|:=\sum_{j=1}^r\lambda_j$.  This $\lambda$ gives a partition of number $|\lambda|$.  Then the Young digram with shape
$\lambda$ is a collection of $|\lambda|$ top-left-justified boxes 
with $\lambda_j$ boxes in the $j$-th row from the bottom as shown below:
\setlength{\unitlength}{0.54mm}
\begin{center}
  \begin{picture}(130,65)
  
\put(5,55){\line(1,0){145}}
\put(5,45){\line(1,0){112}}
  \put(5,35){\line(1,0){91}}
  \put(5,25){\line(1,0){91}}
  \put(5,15){\line(1,0){71}}
  \put(5,5){\line(1,0){71}}
  \put(5,5){\line(0,1){50}}
  \put(20,5){\line(0,1){50}}
  \put(35,5){\line(0,1){50}}
  \put(76,5){\line(0,1){10}}
  \put(60,5){\line(0,1){10}}
  \put(96,25){\line(0,1){10}}
  \put(80,25){\line(0,1){10}}
\put(117,45){\line(0,1){10}}
\put(100,45){\line(0,1){10}}
\put(50,45){\line(0,1){10}}
  \put(10,29){$s_{j}$}
  \put(22,29){$s_{j+1}$}
  \put(-5,29){$j$}
  \put(98,29){$i_{j}=w(j)$}
  \put(81,29){$s_{i_{j}-1}$}
  \put(10,49){$s_k$}
  \put(-5,49){$k$}
  \put(22,49){$s_{k+1}$}
  \put(-5,38){$\vdots$}
  \put(-5,18){$\vdots$}
  \put(65,38){$\vdots$}
  \put(101,49){$s_{i_k-1}$}
  \put(67,49){$\cdots$}
  \put(45,29){$\cdots$}
  \put(20,20){}
  \put(47,18){$\vdots$}
  \put(10,9){$s_1$}
  \put(-5,9){$1$}
  \put(45,9){$\cdots$}
  \put(78,9){$i_1=w(1)$}
  \put(25,9){$s_2$}
  \put(61,9){$s_{i_1-1}$}
    \put(119,48){$i_k=w(k)$}
  \put(5,59){$k+1$}
 \put(21,59){$k+2$}
  \put(100,59){$k+\lambda_k~\quad\cdots~ \quad n$}
      \put(67,59){$\cdots$}
\put(11,-2){$1$}  
\put(26,-2){$2$}    
\put(45,-2){$\cdots$}
\put(61,-2){$i_1-1$}
 \end{picture} 
\end{center}

\smallskip

In the diagram, one can consider a lattice path starting from the bottom left corner and ending to
the top right conner with the label $\{1,2,\ldots,n\}$ in the counterclockwise direction,  then the indices in the Schubert symbol $I=\{w(1)<\cdots<w(k)\}$ appear at the vertical paths. In the case of $I=\{n-k+1,\ldots,n\}$, i.e. the top cell, corresponding to the longest element $w_\circ\in S_n^{(k)}$,
we write $Y_{w_\circ}=(n-k)^k$ representing the $k\times(n-k)$ rectangular diagram.

Here a reduced expression of $w$ can be written by 
\[
w=w_1w_2\cdots w_k \quad{\rm with}\quad w_j:=s_{i_{j}-1}\cdots s_{j},\quad j=1,\ldots,k,
\]
where each $w_j$ is the product of $s_l$ in the $j$-th row from the bottom.
Note that each  $Y_w$ is a sub-diagram of the $k\times (n-k)$ of the top cell, and we have the inclusion relation,
\[
Y_w~\subseteq~Y_{w'}\qquad\Longleftrightarrow\qquad w~\le~ w'.
\]
(Notice the direction of the Bruhat order in this paper.)   We then define:
\begin{Definition} Based on a weak Bruhat order, we define the {\it weak Bruhat graph},
\[
\mathsf{B}(k,n):=\{( Y_w, \to):w\in S_n^{(k)}\}.
\]
That is, the vertices are gives by the Young diagrams $Y_w$, and the edges are defined by
\[
Y_w~\longrightarrow~ Y_{w'}\qquad\text{if}\quad w'=s_jw\quad\text{with}\quad\ell(w')=\ell(w)+1,
\]
for some $s_j\in S_n$.  This graph is also referred to as {\it Young's lattice}
of $\Gr(k,n)$.


Below we define additional graphs  $\mathsf{G}(k,n)$, $\mathsf{G}^*(k,n)$ whose vertices have parameters involving  the Young diagrams $Y_w$. The blanket notational convention  is that if there is an edge in any of these graphs involving a pair $w, w'$ then there is a simple reflection $s_j$ such that  $w'=s_jw$ with $\ell(w')=\ell(w)+1$. These graphs  can therefore be considered as subgraphs of the weak Bruhat graph even if this is not explicitly restated. 
\end{Definition}

\begin{Example} \label{gr25} The graph $\mathsf{B}(2,5)$ is given by
\[
\begin{matrix}
\emptyset  &\longrightarrow&     \young[1][8][$s_2$]      & \longrightarrow   &     \young[2][8][$s_2$,$s_3$]        & \longrightarrow & \young[3][8][$s_2$,$s_3$,$s_4$]    \\[-2.5ex]\\
             &                                &\downarrow     &                               &        \downarrow  &                          &\downarrow \\[-2ex]\\
            &                               &    \young[1,1][8][$s_2$,$s_1$]        &  \longrightarrow    &  \young[2,1][8][$s_2$,$s_3$,$s_1$]           & \longrightarrow & \young[3,1][8][$s_2$,$s_3$,$s_4$,$s_1$]   \\[-2ex]  \\
            &                           &                                      &                                &   \downarrow    &                              &\downarrow \\[-2ex]\\
            &                            &                                   &                                &       \young[2,2][8][$s_2$,$s_3$,$s_1$,$s_2$]       &\longrightarrow  &\young[3,2][8][$s_2$,$s_3$,$s_4$,$s_1$,$s_2$] \\[-2ex]\\
            &                          &                                     &                               &                                        &                         &\downarrow \\[-2ex]\\
            &                          &                                  &                                  &                                       &                         & \young[3,3][8][$s_2$,$s_3$,$s_4$,$s_1$,$s_2$,$s_3$]
  \end{matrix}
\]
Each arrow $\to$ represents the action $s_j$ to the pivot set, and new box added by this action is labeled by $s_j$.  For example, $s_2: (1,2)\to (1,3)$ in the top left one. 
Notice that a reduced expression of $w\in S_n^{(k)}$ can be obtained by following the paths $\to$ starting from $\emptyset$ to the diagram $Y_w$.
For example,  the diagram at the right side in the second row can be written by
\[
w=s_1s_4s_3s_2=s_4s_3s_1s_2=s_4s_1s_3s_2.
\]
depending on the three different paths.
\end{Example}

\section{The integral cohomology of $\Gr(k,n)$ and the incidence graph}

Let us first denote the Schubert variety marked by $Y_w$ as
\[
\Omega_w:=\bigcup_{w'\le w}\Omega^\circ_w.
\]
Let $\sigma_w$ denote the Schubert class associated to $\Omega_w$,
i.e. $\sigma_w:=[\Omega_w]$.
Then the  co-chain complex of $\Gr(k,n)$ is given in the form,
\[
\mathcal{C}^*=\bigoplus_{j=0}^{k(n-k)}\mathcal{C}^j,\quad {\rm with}\quad 
\mathcal{C}^j=\bigoplus_{\ell(w)=j}\mathbb{Z} ~\sigma_w,
\]
The co-boundary operator,
$\delta_j:\mathcal{C}^j\to\mathcal{C}^{j+1}$, gives the form,
\[
\delta_j\sigma_w=\sum_{{w'}>{w}}~{[w;w']}~\sigma_{w'},
\]
where the coefficient $[w;w']$ is called the {\it incidence} number and it takes either ${0}$ or ${\pm 2}$ (see e.g. \cite{koch,casian99}).
We denote the incidence graph of $\Gr(k,n)$ by
\begin{equation}\label{incidenceG}
\mathcal{G}(k,n):=\left\{(\sigma_w,\Rightarrow):\sigma_w\Rightarrow \sigma_{w'}
~\text{if}~[w;w']=\pm 2\right\}.
\end{equation}

Then identifying $\sigma_w$ with the Young diagram $Y_w$ in the Bruhat
graph $\mathsf{B}(k,n)$,  the {\it arrow} $\to$ in $\mathsf{B}(k,n)$ can be considered as the action of the  co-boundary operator $\delta_j$.
The incidence graph $\mathcal{G}(k,n)$ is then obtained by giving
specific information on the incidence numbers for those {arrows}.  The main result of this paper is to provide a ``{\it simple}'' representation of the incidence graph in terms of the Bruhat graph with those specific incidence numbers:
For this purpose, let us first define:

\begin{Definition} \label{weighted}
We define a ${q}$-{\it weighted} Schubert variety $\langle Y_w; \eta(w)\rangle$
where $\eta(w)$ is defined as follows:


\begin{itemize}
\item[(1)]  Insert the letter ${q}$ and ${1}$ alternatively in the boxes of the Young diagram $Y_w$, such that  ${q}$ locates at the northwest conner box of the diagram.  That is, the first box with $s_k$ has
$q$, then the boxes with $s_{k\pm 2j}$ have $q$, and the boxes with $s_{k\pm(2j+1)}$ have 1.
\item[(2)]  Then $\eta(w)$ for each diagram $Y_w$ filled with $1$ and $q$ in the previous step 
is defined by
\[
\eta(w)=\text{the total number of~}q\text{'s~in~the~diagram~} Y_w.
\]
\end{itemize}
We will also need to refer to the $q$ weighted Schubert variety $\langle Y_w; \eta^*(w)\rangle$ defined in exactly  the same way but  
now the  boxes with $s_{k\pm 2j}$ have $1$, and the boxes with $s_{k\pm(2j+1)}$ have $q$.  To make a distinction, the first  arrangement of $q$'s
in the Young diagram will be referred to as {\em standard} and the second as  $q$-{\em shifted}.  We will show in subsection \ref{proof} below that the monomial $q^{\eta(w)}$ can be expressed a
in terms of the Hecke algebra action, a $q$-deformation of the Weyl group $W$.
\end{Definition}

 For each $w\in S_n^{(k)}$, one can easily find the formula
 of $\eta(w)$.
\begin{Lemma}\label{etaformula} The function $\eta(w)$ is given by
\[
\eta(w)=\sum_{j=1}^k\left\lfloor\frac{w(j)-j+\sigma(j)}{2}\right\rfloor
\]
where $\sigma(j)=1$ if $k-j=$even, and $\sigma(j)=0$ if $k-j=$odd.
\end{Lemma}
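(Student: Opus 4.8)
The plan is to compute $\eta(w)$ directly by counting the letters $q$ one row at a time and then summing the row contributions. The computation rests on the explicit labeling of the boxes of $Y_w$ by simple reflections recorded above: in the $j$-th row from the bottom the boxes are labeled, from left to right, by $s_j, s_{j+1}, \ldots, s_{i_j-1}$, where $i_j = w(j)$. Equivalently, the box in row $j$ and column $c$ carries the label $s_{j+c-1}$, and that row contains exactly $\lambda_j = w(j)-j$ boxes. Note that for the top row $j=k$ the leftmost box is labeled $s_k$, which is indeed the northwest corner box.

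First I would translate the filling rule of Definition~\ref{weighted} into a single parity condition. By construction a box receives the letter $q$ precisely when its label $s_l$ satisfies $l \equiv k \pmod 2$, since $s_k$ gets $q$ and the $q$'s sit on the boxes $s_{k\pm 2j}$. Applying this to the box in row $j$, column $c$, whose label is $s_{j+c-1}$, the box carries a $q$ if and only if $j+c-1 \equiv k \pmod 2$, that is, if and only if $c \equiv k-j+1 \pmod 2$.

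The heart of the argument is then a short count of the admissible columns $c \in \{1,2,\ldots,\lambda_j\}$ in a fixed row, split on the parity of $k-j$. If $k-j$ is even (so $\sigma(j)=1$) the condition forces $c$ to be odd, and the number of odd integers in $\{1,\ldots,\lambda_j\}$ equals $\lfloor (\lambda_j+1)/2 \rfloor$. If $k-j$ is odd (so $\sigma(j)=0$) the condition forces $c$ to be even, and the number of even integers in $\{1,\ldots,\lambda_j\}$ equals $\lfloor \lambda_j/2 \rfloor$. Both cases are uniformly captured by $\lfloor (\lambda_j + \sigma(j))/2 \rfloor$, which is exactly $\lfloor (w(j)-j+\sigma(j))/2 \rfloor$ after substituting $\lambda_j = w(j)-j$. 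Summing these row contributions over $j=1,\ldots,k$ yields the claimed formula.

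I do not expect a genuine obstacle here: once the box-labeling and the $q$-placement rule are phrased as the single congruence above, the result is routine parity bookkeeping. The only points requiring care are the orientation conventions (rows counted from the bottom, the leftmost box of row $j$ labeled $s_j$) and the correct attachment of the correction term $\sigma(j)$ to the parity of $k-j$. I would guard against parity slips by checking the formula against the filled diagrams of Example~\ref{gr25}; for instance, the top cell $w_\circ$ of $\Gr(2,5)$ has $\lambda=(3,3)$ and $k=2$, giving $\lfloor 3/2 \rfloor + \lfloor 4/2 \rfloor = 1+2 = 3$, in agreement with a direct count of the $q$'s in the $(3,3)$ rectangle.
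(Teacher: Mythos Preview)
Your argument is correct. The paper does not actually supply a proof of this lemma: it only remarks that ``one can easily find the formula of $\eta(w)$'' and then states the result, so your row-by-row count is precisely the kind of direct verification the authors leave to the reader, and there is nothing to compare against.
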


\begin{Definition}\label{graphG}  We define a graph $\mathsf{G}(k,n)$ for the $q$-weighted Schubert varieties by modifying the Bruhat graph $\mathsf{B}(k,n)$,
\[
\mathsf{G}(k,n)=\{(\langle Y_w;\eta(w)\rangle,\Rightarrow): Y_w\Rightarrow Y_{w'}\text{~if~}\eta(w)=\eta(w')\}.
\]
That is, identifying each Young diagram with $q$'s as the $q$-weighted Schubert variety, and replacing the edge $\to$ between $ Y_w$ and $Y_{w'}$ in the Bruhat graph $\mathsf{B}(k,n)$ with ${\Rightarrow}$ if $\eta(w)=\eta(w')$.  

There is an alternative graph $\mathsf{G}^*(k,n)$ in which an edge $\Rightarrow$ corresponds instead to  $\eta^*(w)=\eta^*(w')$, i.e.
\[
\mathsf{G}^*(k,n)=\{(\langle Y_w;\eta^*(w)\rangle,\Rightarrow): Y_w\Rightarrow Y_{w'}\text{~if~}\eta^*(w)=\eta^*(w')\}.
\]  
\end{Definition}

\begin{Example} The graph $\mathsf{G}(2,5)$ is given by

\[
\begin{matrix}
\emptyset  &  \longrightarrow  &     \young[1][8][$q$]      & {\Longrightarrow}   &     \young[2][8][$q$,$1$]        &\longrightarrow & \young[3][8][$q$,$
1$,$q$]    \\[-2.5ex]\\
             &                                &{\Downarrow }    &                               &       {\Downarrow } &                          &{\Downarrow} \\[-2ex]\\
            &                               &    \young[1,1][8][$q$,$1$]        & {\Longrightarrow }   &  \young[2,1][8][$q$,$1$,$1$]           & \longrightarrow & \young[3,1][8][$q$,$1$,$q$,$1$]   \\[-2ex]  \\
            &                           &                                      &                                &  \downarrow   &                              &\downarrow \\[-2ex]\\
            &                            &                                   &                                &       \young[2,2][8][$q$,$1$,$1$,$q$]       &\longrightarrow &\young[3,2][8][$q$,$1$,$q$,$1$,$q$] \\[-2ex]\\
            &                          &                                     &                               &                                        &                         &{\Downarrow} \\[-2ex]\\
            &                          &                                  &                                  &                                       &                         & \young[3,3][8][$q$,$1$,$q$,$1$,$q$,$1$]
  \end{matrix}
\]
\end{Example}

Then we have the main theorem of this paper:

\begin{Theorem}\label{main}
The graph $\mathsf{G}(k,n)$ with
the edges $\Rightarrow$ having the incidence number $\pm 2$ (others are zero)
gives the {\it incidence} graph $\mathcal{G}(k,n)$ of the real Grassmann manifold $\Gr(k,n)$.
\end{Theorem}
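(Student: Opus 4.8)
The plan is to reduce the theorem to a purely local statement about each covering relation, and then to match the topological incidence number against the combinatorics of the checkered filling.

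First I would observe that $\mathcal{G}(k,n)$ and $\mathsf{G}(k,n)$ are built on the same vertex set (the Schubert classes $\sigma_w$, identified with the diagrams $Y_w$) and that both place edges only on covers of the weak Bruhat graph $\mathsf{B}(k,n)$: in $\mathcal{G}(k,n)$ because $[w;w']$ can be nonzero only when $\ell(w')=\ell(w)+1$, and in $\mathsf{G}(k,n)$ by the definition of the graph (Definition \ref{graphG}), whose $\Rightarrow$ edges refine Bruhat edges. Hence it suffices to prove, for each cover $Y_w\to Y_{w'}$ with $w'=s_jw$ and $\ell(w')=\ell(w)+1$, the equivalence
\[
[w;w']=\pm2\qquad\Longleftrightarrow\qquad \eta(w)=\eta(w').
\]
Since the incidence number is always $0$ or $\pm2$ (by \cite{koch,casian99}) and $\mathsf{G}(k,n)$ draws $\Rightarrow$ exactly when $\eta(w)=\eta(w')$, this equivalence carries the whole content of the theorem.

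Next I would compute the jump $\eta(w')-\eta(w)$. The cover adds a single box, and by the labeling convention this box carries the letter $s_j$. In the standard filling of Definition \ref{weighted} a box labeled $s_l$ receives $q$ precisely when $l\equiv k\pmod 2$, so
\[
\eta(w')-\eta(w)=\begin{cases}1,& j\equiv k\pmod 2,\\[2pt]0,& j\not\equiv k\pmod 2.\end{cases}
\]
The same conclusion follows by differencing the closed formula of Lemma \ref{etaformula} across the cover. Thus $\eta(w)=\eta(w')$ exactly when the added box is a ``$1$''-box, and the target equivalence becomes
\[
[w;w']=\pm2\qquad\Longleftrightarrow\qquad j-k\ \text{is odd}.
\]
In particular the assertion is that the incidence number depends only on the parity of the label $j$ of the added box relative to $k$, and not otherwise on $w$.

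The crux is to establish this last equivalence from the topology of $\Gr(k,n)$. Here I would compute $[w;w']$ as the degree of the attaching map of the cell $\Omega^\circ_{w'}$ onto $\Omega^\circ_w$: moving the single pivot displaced by $s_j$ sweeps out a one-parameter family whose two ends are glued by an orientation comparison, so the degree has the form $1+(-1)^{m}$, yielding $\pm2$ when $m$ is even and $0$ when $m$ is odd. The work is to identify the parity of $m$ with that of $j-k$. The anchoring case is transparent: for $\Gr(1,n)=\mathbb{RP}^{\,n-1}$ the complex is the standard cellular complex, $\delta e_d=(1+(-1)^{d+1})e_{d+1}$, which is exactly $[w;w']=\pm2\Leftrightarrow j-k$ odd with $k=1$ and $j=d+1$. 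To propagate this to general $k$ I would use the Hecke-algebra computation announced before Lemma \ref{etaformula} (subsection \ref{proof}), in which $q^{\eta(w)}$ is produced by the $q$-deformed action of the simple reflections, so that the alternation recording $\pm2$ versus $0$ can be read off term by term and matched with the filling.

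The main obstacle I anticipate is precisely this last point: showing that the $\pm2$/$0$ dichotomy is governed only by the local parity $j-k\bmod 2$ and is independent of the surrounding configuration of $w$. Reducing the genuinely topological degree computation to this clean local rule—rather than to an invariant that a priori could depend on the entire diagram—is where the real difficulty lies, and it is exactly what the checkered filling and its Hecke-algebra reformulation are designed to encapsulate.
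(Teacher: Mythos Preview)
Your reduction is correct and useful: since $\eta(w')-\eta(w)\in\{0,1\}$ records exactly whether the added box carries $q$, the theorem is equivalent to the claim that for every cover $w'=s_jw$ in $S_n^{(k)}$ one has $[w;w']=\pm2$ if and only if $j\not\equiv k\pmod 2$. But this is precisely where the proof must happen, and your proposal does not close it. The attaching-map sketch produces a degree $1+(-1)^m$ without identifying $m$; the assertion that the parity of $m$ coincides with that of $j-k$, \emph{uniformly in $w$}, is the entire theorem restated, not a step towards it. Your fallback---``use the Hecke-algebra computation announced in subsection~\ref{proof}''---is a forward reference to the very argument you are trying to supply.

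The paper takes a different route that avoids any direct degree computation. It imports from \cite{casian99,casian:06} a finished theorem (Theorem~3.2 here): there is an explicit $W$-action on sign vectors $\epsilon\in\{\pm\}^{n-1}$ via the Cartan matrix, a counter $\hat\eta(w,\epsilon)$ that increments exactly when a simple reflection changes the sign vector, and the incidence number satisfies $[w;w']=\pm2\iff\hat\eta(w,\epsilon_-)=\hat\eta(w',\epsilon_-)$. The remaining work is then purely combinatorial: Lemma~\ref{lema2} shows $w(\epsilon_-)=s_jw(\epsilon_-)\iff\eta(w)=\eta(s_jw)$ by induction on $n$, slicing $\mathsf{B}(k,n)$ along the top row into subgraphs isomorphic to $\mathsf{B}(k-1,k+m-1)$ (with a standard/$q$-shifted swap), and the base case is exactly your $\Gr(1,n)=\mathbb{R}P^{n-1}$ computation. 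Proposition~\ref{lema1} then gives $\hat\eta(w,\epsilon_-)=\eta(w)$, which finishes the proof. In short, the paper \emph{outsources} the topological statement about $[w;w']$ to prior work and proves a combinatorial identity between two counters; your proposal tries to do the topology directly and does not get past the obstacle you yourself flag.
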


Based on this theorem, we can compute the $\mathbb{Z}$-cohomology of $\Gr(k,n)$.
For example, from the above example, one can easily compute the ${\mathbb{Z}}$-cohomology of $\Gr(2,5)$, and we have
\begin{align*}
H^0&=\mathbb{Z},\quad H^1=0,\quad H^2=\mathbb{Z}_2,\quad H^3=\mathbb{Z}_2\\
H^4&=\mathbb{Z}\oplus\mathbb{Z}_2,\quad H^5=0,\quad H^6=\mathbb{Z}_2
\end{align*}
Note $\Gr(2,5)$ is {not} orientable. {This is always true if $M$ is odd}, that is, we have:
\begin{Corollary}
$\Gr(k,n)$ is orientable {\it iff} $n$ is even.
\end{Corollary}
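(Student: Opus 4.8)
The plan is to reduce the orientability of $\Gr(k,n)$ to a single incidence number at the top of the cochain complex $\mathcal{C}^*$, and then to read off that number from the checkered filling of the top Young diagram. Recall that $\Gr(k,n)$ is a closed connected smooth manifold of dimension $d:=k(n-k)=\ell(w_\circ)$, the length of the longest element. A closed connected $d$-manifold is orientable if and only if its top integral cohomology is free of rank one, i.e. $H^d(\cdot;\mathbb{Z})\cong\mathbb{Z}$; for a non-orientable closed manifold one has instead $H^d\cong\mathbb{Z}_2$. Thus everything comes down to computing $H^d(\Gr(k,n);\mathbb{Z})$.

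First I would locate the top of the complex. Since $\mathcal{C}^d=\mathbb{Z}\,\sigma_{w_\circ}$ is generated by the single top cell $w_\circ$ (the full $k\times(n-k)$ rectangle), $H^d$ is the cokernel of $\delta_{d-1}\colon\mathcal{C}^{d-1}\to\mathcal{C}^d$. There is exactly one Schubert cell of dimension $d-1$: a subdiagram of the $k\times(n-k)$ rectangle of size $d-1$ is the rectangle with one box deleted, and in the convention used here (rows weakly increasing from the bottom) the only box whose removal leaves a valid shape is the rightmost box of the bottom row. Denoting the corresponding element $w'$, we get $\mathcal{C}^{d-1}=\mathbb{Z}\,\sigma_{w'}$ and $\delta_{d-1}\sigma_{w'}=[w';w_\circ]\,\sigma_{w_\circ}$, so that $H^d\cong\mathbb{Z}/[w';w_\circ]\mathbb{Z}$. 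This is $\mathbb{Z}$ when $[w';w_\circ]=0$ and $\mathbb{Z}_2$ when $[w';w_\circ]=\pm2$.

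It then remains to decide when the edge $Y_{w'}\to Y_{w_\circ}$ carries incidence $\pm2$. By Theorem \ref{main} this happens exactly when the edge survives in $\mathsf{G}(k,n)$, i.e. when $\eta(w')=\eta(w_\circ)$, and the incidence is $0$ exactly when $\eta(w')\neq\eta(w_\circ)$. Now $Y_{w'}$ is obtained from the full rectangle by deleting one box, so $\eta(w_\circ)-\eta(w')$ equals the weight ($1$ or $0$) of the deleted box under the standard checkered filling with $q$ at the northwest corner. That box sits at row $k$ from the top and column $n-k$ from the left, so its label is $q$ precisely when $k+(n-k)=n$ is even. Therefore, if $n$ is even the deleted box carries $q$, whence $\eta(w')=\eta(w_\circ)-1\neq\eta(w_\circ)$, the incidence is $0$, and $H^d\cong\mathbb{Z}$, so $\Gr(k,n)$ is orientable; if $n$ is odd the deleted box carries $1$, whence $\eta(w')=\eta(w_\circ)$, the incidence is $\pm2$, and $H^d\cong\mathbb{Z}_2$, so $\Gr(k,n)$ is non-orientable. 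As a sanity check, $\Gr(2,5)$ has $n=5$ odd and indeed $H^6=\mathbb{Z}_2$ in the computation above.

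The only genuinely delicate points are the two bookkeeping claims: the uniqueness of the codimension-one cell, which follows from the rectangular shape of $Y_{w_\circ}$ together with the weakly-increasing-rows convention, and the parity computation identifying the deleted corner box as a $q$-box iff $n$ is even, which I would verify directly from Definition \ref{weighted} (or, equivalently, from the closed formula in Lemma \ref{etaformula} by comparing $\eta(w_\circ)$ with $\eta(w')$). Both are short; the conceptual content is entirely in the reduction to the single top incidence number supplied by Theorem \ref{main}.
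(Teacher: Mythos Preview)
Your proof is correct and follows the same route as the paper's: both reduce orientability to the single incidence number $[w';w_\circ]$ at the top of the complex and determine it by checking whether the southeast corner box of the full rectangle $Y_{w_\circ}=(n-k)^k$ carries $q$ in the standard checkered filling (it does iff $n$ is even). Your write-up is considerably more detailed than the paper's two-line argument---in particular you spell out the uniqueness of the codimension-one cell and the link between $H^{k(n-k)}$ and orientability---but the underlying idea is identical.
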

\begin{Proof} Since the box at the southeast conner of the top Young diagram, $(n-k)^k$, has 
the letter ${q}$ iff $n$ is {\it even}.  This implies the incidence number $[w;w_{\circ}]=0$ for
the longest element $w_{\circ}$.
\end{Proof}


\subsection{Outline of the proof of the main theorem}\label{proof}

The proof  of Theorem \ref{main} relies on the description given in \cite{casian:06} on the incidence graph for real flag manifolds. 
In \cite{casian:06}, each local system  on the real flag manifold is parametrized in terms of a vector of signs $\epsilon=(\epsilon_1,\cdots , \epsilon_{n-1})\in\{\pm\}^{n-1}$,
which encodes the structure of the local system (see Definition 4.3 of \cite{casian:06}). The sign $\epsilon_i=-$ indicates that the local system is  {\em constant}along $s_i$ (restricted to a fiber 
which corresponds to the flag manifold of an $\text{SL}_2({\mathbb R})$). Similarly, the sign $\epsilon_i=+$ corresponds to
the local system which is  {\em  twisted}  along $s_i$.  In the case of cohomology with constant coefficients, we define an action of the Weyl group on the signs starting with $(-,\cdots ,-)$ representing a {\it constant} local system:

\begin{Definition}\label{WactionS} \cite[Definition 2.10]{casian:06}  Each simple reflection $s_i\in W$  acts on the sign $\epsilon_j$ in the vector $\epsilon=(\epsilon_1,\ldots,\epsilon_{n-1})\in \{\pm\}^{n-1}$ as
\[
s_i:~\epsilon_j ~\longrightarrow~\epsilon_j\epsilon_i^{-C_{j,i}},
\]
where $C_{j,i}$ is the Cartan matrix of $\mathfrak{sl}_n(\mathbb{R})$.  That is,
$s_i(\epsilon_i)=\epsilon_i, s_i(\epsilon_{i\pm1})=\epsilon_{i\pm1}\epsilon_i$ and $s_i(\epsilon_j)=\epsilon_j$ for $|i-j|\ge 2$.
\end{Definition}

With the $W$-action on $\{\pm\}^{n-1}$, we also define a number ${\hat{\eta}(w,\epsilon)}$ 
with $\hat{\eta}(e,\epsilon)=0$ by
\[
\hat\eta(s_iw,\epsilon)~=\left\{\begin{array}{lll}
\hat\eta(w,\epsilon)\qquad&\text{if}\quad s_i(w(\epsilon))=w(\epsilon),\\[1.0ex]
1+\hat\eta(w,\epsilon)  \qquad&\text{if}\quad s_i(w(\epsilon))\ne w( \epsilon).
\end{array}\right.
\]
  The $ \hat\eta(w,\epsilon)$  is then  simply the number of times that the simple reflections
   in a fixed  reduced expression of $w$
 have acted non-trivially (changed a sign) starting with  $\epsilon=(\epsilon_1,\cdots,\epsilon_{n-1})$ (see
 Definition 4.6 and Proposition 4.2 of  \cite{casian:06}).
  The numbers $\hat{\eta}(w,\epsilon)$ have the algebraic description in
 terms of the Hecke algebra $\mathcal{H}=\mathbb{Z}[q,q^{-1}]\otimes_{\mathbb{Z}}\mathbb{Z}[W]$ (see Section 4 of \cite{casian:06} for the details).  

We now recall the connection with the incidence graph $\mathcal{G}(k,n)$.  There is an edge $\Rightarrow$   between two vertices,
$\sigma_w\Rightarrow \sigma_{s_iw}$ related by a simple reflection $s_i$ exactly when the $s_i$-action does not change the signs, i.e. $w(\epsilon)=s_i  w(\epsilon)$ starting with $\epsilon=(-,\ldots,-)$.  Then defining a weighted Schubert class by $\langle \sigma_w;\hat{\eta}(w,\epsilon)\rangle$, we have
the following theorem which is proven in \cite{casian99}, and also in Theorem 4.9 in \cite{casian:06}:
\begin{Theorem}  With $\epsilon=(-,\ldots,-)$, 
the incidence graph $\mathcal{G}(k,n)$ is expressed by
\[
\mathcal{G}(k,n)=\left\{(\langle\sigma_w;\hat{\eta}(w,\epsilon)\rangle,\Rightarrow):\sigma_w\Rightarrow\sigma_{w'}~
\text{if}~\hat{\eta}(w,\epsilon)=\hat{\eta}(w',\epsilon)\right\},
\]
that is, one can replace $[w;w']\ne 0$ with the equation $\hat{\eta}(w,\epsilon)=\hat{\eta}(w',\epsilon)$.
\end{Theorem}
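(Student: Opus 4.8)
The plan is to prove the equivalence $[w;w']\neq 0\iff\hat\eta(w,\epsilon)=\hat\eta(w',\epsilon)$ by reducing each incidence number to a rank-one computation on a single $\mathrm{SL}_2(\mathbb{R})$-fiber and then transporting the coefficient system through the Bruhat order via the $W$-action of Definition \ref{WactionS}. It suffices to treat pairs $w'=s_iw$ with $\ell(w')=\ell(w)+1$: these are the only pairs joined by an edge of $\mathsf{B}(k,n)$, and, since $[w;w']$ is the cellular incidence between cells of adjacent dimension, the only pairs for which $[w;w']$ can be nonzero.

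First I would localize the coboundary relating $\Omega^\circ_w$ and $\Omega^\circ_{s_iw}$. After the standard rank-one reduction, the attachment of these two cells is modeled on the real flag manifold of the minimal parabolic attached to $s_i$, namely $\mathbb{RP}^1\cong S^1$. On $\mathbb{RP}^1$ the cellular coboundary joining the $0$-cell to the $1$-cell is $0$ for constant coefficients and $\pm2$ for the twisted ($-1$-monodromy) local system. Hence $[w;s_iw]=\pm2$ exactly when the coefficient system is \emph{twisted} along $s_i$ over $\Omega^\circ_w$, and $[w;s_iw]=0$ when it is constant there.

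Next I would read off this twisting combinatorially. Beginning from the constant system $\epsilon=(-,\dots,-)$, the coefficient system effective over the cell indexed by $w$ is obtained by transporting $\epsilon$ along a reduced word for $w$; by \cite{casian:06} this transport is precisely the formal $W$-action $w(\epsilon)$, whose $i$-th entry encodes constant versus twisted behavior along $s_i$ at that stage. Checking Definition \ref{WactionS} directly, $\bigl(w(\epsilon)\bigr)_i=+$ holds iff $s_i$ fixes the whole vector, i.e. $s_i(w(\epsilon))=w(\epsilon)$, and this twisted ($+$) case is the one producing incidence $\pm2$. Substituting into the recursion for $\hat\eta$ yields $\hat\eta(s_iw,\epsilon)=\hat\eta(w,\epsilon)$ in the twisted case and $\hat\eta(s_iw,\epsilon)=1+\hat\eta(w,\epsilon)$ in the constant case, giving $[w;s_iw]=\pm2\iff\hat\eta(w,\epsilon)=\hat\eta(s_iw,\epsilon)$, as claimed.

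The main obstacle is the middle step: verifying that the accumulated monodromy of the coefficient system over $\Omega^\circ_w$ is faithfully recorded by the formal $W$-action $w(\epsilon)$ independently of the chosen reduced word, and that the rank-one reduction descends correctly to $\Gr(k,n)$, realized as the quotient by the parabolic $P_k$, rather than only to the full flag manifold. This is exactly where I would invoke the local-system analysis of \cite[\S4]{casian:06} (Theorem 4.9 there) together with the incidence computation of \cite{casian99}; the $\mathbb{RP}^1$ calculation and the remaining sign bookkeeping are then routine.
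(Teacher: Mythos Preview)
The paper does not actually prove this theorem: it is quoted as a result established in \cite{casian99} and in Theorem~4.9 of \cite{casian:06}, and is then used as a black box in the proof of Theorem~\ref{main}. Your sketch is therefore not competing against any argument in the present paper; it is an outline of the strategy behind the cited results. As such it is accurate: the reduction to pairs $w'=s_iw$ with $s_i$ simple (valid here because the Bruhat order on $S_n^{(k)}$ is minuscule, so every covering relation has this form), the rank-one $\mathbb{RP}^1$ computation giving incidence $0$ in the constant case and $\pm 2$ in the twisted case, and the identification of the effective local system over $\Omega^\circ_w$ with the sign vector $w(\epsilon)$ are exactly the ingredients of \cite[\S4]{casian:06}. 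Your verification that $\bigl(w(\epsilon)\bigr)_i=+$ is equivalent to $s_i\bigl(w(\epsilon)\bigr)=w(\epsilon)$, and the subsequent match with the recursion defining $\hat\eta$, are correct. You also correctly flag the two genuinely nontrivial points---independence of $w(\epsilon)$ from the chosen reduced word and the descent from the full flag manifold to $G/P_k$---and defer them to \cite{casian99} and \cite{casian:06}, which is precisely what the paper itself does.
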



For $\Gr(k,n)$ there are exactly two initial signs $\epsilon$ corresponding to local systems on $\Gr(n,k)$.   Let $\epsilon_{\pm}$ denote those initial signs which are defined by
\[
\epsilon_{\pm}:=(-,\ldots,-,\overset{k}{\pm},-,\ldots,-).
\]
As explained above, $\epsilon_-$ represents a constant local system, while
$\epsilon_+$ represents a non-constant local system.
Then we have:
\begin{Lemma} \label {lema2}  For any $w\in S_n^{(k)}$, one can show that
\begin{enumerate}
\item[(a)]  $w(\epsilon_{-})= s_iw( \epsilon_{-})$  if and only if  $\eta(w)= \eta(s_iw)$, and
\item[(b)]  $w(\epsilon_{+})= s_iw (\epsilon_{+})$  if and only if  $ \eta^*(w)= \eta^*(s_iw)$.
\end{enumerate}
\end{Lemma}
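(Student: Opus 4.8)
The plan is to reduce each of (a), (b) to a statement about the parity of $i-k$, and then to read off the two signs $(w(\epsilon_-))_i$, $(w(\epsilon_+))_i$ from a single closed formula. Throughout, $s_i$ is the reflection with $\ell(s_iw)=\ell(w)+1$, so an edge is present, and I write $v=w(\epsilon_\pm)$.

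First I would record two elementary reductions. On the sign side, since $s_i$ multiplies only the neighbouring entries $\epsilon_{i\pm1}$ by $\epsilon_i$ and fixes $\epsilon_i$ itself, one has $s_i(v)=v$ exactly when the $i$-th entry of $v$ equals $+$; hence $w(\epsilon_\pm)=s_iw(\epsilon_\pm)$ is equivalent to $(w(\epsilon_\pm))_i=+$. On the Young-diagram side, the reduced expression $w=w_1\cdots w_k$ shows that the single box created in passing from $Y_w$ to $Y_{s_iw}$ carries the label $s_i$, and in the standard filling a box labelled $s_l$ holds $q$ iff $l\equiv k\pmod 2$. Thus $\eta(s_iw)-\eta(w)=1$ iff $i\equiv k\pmod2$, so $\eta(w)=\eta(s_iw)$ iff $i-k$ is odd; dually $\eta^*(w)=\eta^*(s_iw)$ iff $i-k$ is even. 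Consequently (a) is equivalent to ``$(w(\epsilon_-))_i=+\Leftrightarrow i-k$ odd'' and (b) to ``$(w(\epsilon_+))_i=+\Leftrightarrow i-k$ even''.

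Next I would obtain the closed formula for the signs. Writing $\epsilon_j=(-1)^{a_j}$ turns the rule of Definition \ref{WactionS} into the $\mathbb{Z}/2$-linear map $a_j\mapsto a_j+C_{j,i}a_i$; since $C_{j,i}\bmod 2$ records adjacency in the $A_{n-1}$ Dynkin diagram, this is precisely the reflection action of $W=S_n$ on fundamental-weight coordinates reduced mod $2$ (note $(s_ia)_i=-a_i\equiv a_i$). As two $W$-actions agreeing on generators, it coincides with $(w(a))_i\equiv\langle w\mu,\alpha_i^\vee\rangle\pmod2$, where $\mu$ is the weight with coordinates $a$. For $\epsilon_-$ one has $a=(1,\dots,1)$, i.e. $\mu=\rho$, and $\langle w\rho,\alpha_i^\vee\rangle=\langle\rho,w^{-1}\alpha_i^\vee\rangle=w^{-1}(i+1)-w^{-1}(i)$, so
\[
(w(\epsilon_-))_i=+\quad\Longleftrightarrow\quad w^{-1}(i+1)\equiv w^{-1}(i)\pmod 2 .
\]
For $\epsilon_+$ the only change is $\mu=\rho-\omega_k$, which adds the term $\langle\omega_k,w^{-1}\alpha_i^\vee\rangle\equiv[w^{-1}(i)\le k]+[w^{-1}(i+1)\le k]\pmod2$.

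Finally I would run the block-count inside $S_n^{(k)}$. For a legal box-addition $i$ is a pivot while $i+1$ is not, so $p:=w^{-1}(i)\le k<w^{-1}(i+1)=:q$. Exactly $p-1$ pivots lie below $i$, so the number of non-pivots below $i$ is $(i-1)-(p-1)=i-p$; since $i+1=w(q)$ is the $(q-k)$-th non-pivot, the non-pivots below $i+1$—which are the same set—number $q-1-k$. Hence $q-1-k=i-p$, i.e. $q=i-p+k+1$, giving $q-p\equiv i+k+1\equiv i-k+1\pmod2$. Therefore $(w(\epsilon_-))_i=+$ iff $q-p$ is even iff $i-k$ is odd, which is (a). For (b) the extra indicator term is $1+0=1$ because $p\le k<q$, flipping the parity and yielding $(w(\epsilon_+))_i=+\Leftrightarrow i-k$ even, which is (b). The main obstacle I anticipate is the weight-lattice identification together with its bookkeeping (the collapse $-1\equiv1$ and the correct placement of $w^{-1}$); once the formula $(w(\epsilon_-))_i\equiv w^{-1}(i+1)-w^{-1}(i)$ is secured, the rest is the short count above. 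A self-contained alternative, avoiding weights, is to prove this sign formula by induction on $\ell(w)$ along the edges of $\mathsf{B}(k,n)$, tracking how one generator alters the two neighbouring signs; this is more elementary but combinatorially heavier.
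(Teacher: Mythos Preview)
Your argument is correct, and it is genuinely different from the paper's proof.  The paper proceeds by induction on $n$: it first checks the lemma directly on the one-row portion of $\mathsf{B}(k,n)$ (recovering the $\Gr(1,n-k+1)$ picture), then slices $\mathsf{B}(k,n)$ into columns according to the length of the top row, identifies each column with a smaller $\mathsf{B}(k-1,k+m-1)$ with the standard and $q$-shifted fillings interchanged, and invokes the inductive hypothesis.  You instead linearize the sign action as the Weyl action on the weight lattice mod $2$, obtain the closed formula $(w(\epsilon_-))_i=+ \Leftrightarrow w^{-1}(i+1)\equiv w^{-1}(i)\pmod 2$, and finish with the short pivot count giving $q-p\equiv i-k+1\pmod 2$.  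Your approach delivers more: an explicit description of the full sign vector $w(\epsilon_\pm)$ at every coordinate, not just whether it is fixed by a particular $s_i$, and it handles (a) and (b) uniformly without the standard/shifted swap that the paper has to track through the induction.  The paper's route, by contrast, stays entirely inside its own combinatorial framework (Young diagrams and the graphs $\mathsf{B},\mathsf{G},\mathsf{G}^*$) and does not require identifying the sign action with anything weight-theoretic; it is closer in spirit to the ``self-contained alternative'' you mention at the end, though organized as an induction on $n$ rather than on $\ell(w)$.
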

\begin{proof} We give an outline of the proof by induction on $n$  using
the weak Bruhat graph $\mathsf{B}(k,n)$ (see Example \ref{gr25}).
We  first consider the partial graph consisting of the Young diagrams $Y_w$ with one row, i.e. $w=s_{k+m-1}\cdots s_{k+1}s_k$, or in terms of the Schubert symbol $w\cdot\{1,\ldots,k\}$ which is
\[
\{1,2,\ldots,k-1,k+m\}\qquad\text{for}\quad m=0,\ldots,n-k.
\]
In terms of the signs representing local systems,  we compute $w(\epsilon_-)$ with $\epsilon_-=(-,\ldots,-)$.
It is easy to check with a direct calculation that  Lemma \ref{lema2} holds for this
partial graph. One recovers the incidence graph of $\Gr(1, n-k+1)\cong \mathbb{R}P^{n-k}$ in this first step. This graph  has an alternating pattern in $\Rightarrow$. 
This edges $\Rightarrow$ can be simultaneously  described in terms of agreement of $\eta(w)$ and in terms of the $W$-action on signs. Lemma \ref{lema2} is then
true for the edges in this partial graph. 

If one starts with $\epsilon_{+}$ then the graph
 starts with $\Rightarrow$ and this corresponds to a $q$-shifted arrangement which is the the incidence graph of $\Gr(1,n-k+1)$ with local coefficients.


We can  now decompose the graph $\mathsf{B}(k,n)$ into a disjoint union
of sub-graphs.  Each sub-graph consists of the Young diagrams with the same
first row having $m$ boxes for $m=1,\ldots,n-k$. This sub-graph can be identified with
$\mathsf{B}(k-1,k+m-1)$ after removing the first common top row (in the Example \ref{gr25}, those subgraphs correspond
to the columns of $\mathsf{B}(2,5)$).
We note, however,  that if start with a Young diagram with a {\em standard}  $q$ arrangement,   the Young diagrams in the decomposition, excluding that the common top row, correspond to $\Gr(k-1, k+m-1)$ but are $q$-{\em shifted}.
This is a trivial observation related to the fact that $q$'s are placed in an alternating pattern and we are starting from the second row which now
has $q$ and not $1$ at the beginning.  
We thus have a decomposition into a disjoint union of subgraphs associated to $\Gr(k-1,k+m-1)$. Then by induction,  Lemma \ref{lema2} holds for the subgraphs and this allows us to complete the proof.
\end{proof}

This Lemma with the definition of $\hat{\eta}(w,\epsilon)$ then leads to the following Proposition:
\begin{Proposition}\label{lema1}
We have $\hat \eta (w, \epsilon_{-}) = \eta(w)$ and $\hat \eta (w, \epsilon_{+}) = \eta^*(w)$.
\end{Proposition}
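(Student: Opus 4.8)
The plan is to prove both identities by a single induction on the length $\ell(w)$, equivalently on the number of boxes of $Y_w$, feeding Lemma \ref{lema2} into the defining recursion for $\hat\eta(w,\epsilon)$. I would run the two cases $(\epsilon_-,\eta)$ and $(\epsilon_+,\eta^*)$ in parallel, since they differ only by exchanging Lemma \ref{lema2}(a) for (b). The base case is immediate: for $w=e$ the empty diagram carries no $q$, so $\eta(e)=\eta^*(e)=0$, matching the normalization $\hat\eta(e,\epsilon)=0$ built into the definition of $\hat\eta$.

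For the inductive step I would assume the identities for all elements of length $m$ and take $\ell(w)=m+1$. Choosing a factorization $w=s_iw'$ with $\ell(w')=m$ (legitimate because $\hat\eta$ is independent of the reduced expression used), the covering edge $Y_{w'}\to Y_w$ in $\mathsf{B}(k,n)$ adjoins a single box, so $\eta(w)-\eta(w')\in\{0,1\}$ according as the new box receives a $1$ or a $q$. Since the $W$-action is a group action, $s_i\bigl(w'(\epsilon_-)\bigr)=(s_iw')(\epsilon_-)=w(\epsilon_-)$, so the recursion's hypothesis $s_i(w'(\epsilon_-))=w'(\epsilon_-)$ reads $w(\epsilon_-)=w'(\epsilon_-)$, which by Lemma \ref{lema2}(a) is equivalent to $\eta(w)=\eta(w')$. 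In that case the recursion gives $\hat\eta(w,\epsilon_-)=\hat\eta(w',\epsilon_-)=\eta(w')=\eta(w)$ by the inductive hypothesis; otherwise the sign flips, so $\eta(w)\ne\eta(w')$, which by the dichotomy forces $\eta(w)=\eta(w')+1$, and the recursion gives $\hat\eta(w,\epsilon_-)=1+\hat\eta(w',\epsilon_-)=1+\eta(w')=\eta(w)$. Either way $\hat\eta(w,\epsilon_-)=\eta(w)$, closing the induction; substituting $\epsilon_+,\eta^*$ and Lemma \ref{lema2}(b) gives the second identity verbatim.

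The genuine content has already been front-loaded into Lemma \ref{lema2}, so what remains is bookkeeping; the one place I would watch is the passage from the biconditional of Lemma \ref{lema2}, which only records \emph{whether} $\eta$ changes, to the exact increment $+1$ demanded by the recursion. That upgrade rests entirely on the combinatorial fact that a weak-Bruhat cover adds exactly one box, capping the jump in $\eta$ at one. Once this is in hand, the two notions of increment by one --- the recursion's ``add $1$ when a sign flips'' and the counting rule's ``add $1$ when the new box is a $q$'' --- are matched edge by edge by Lemma \ref{lema2}, and the proposition follows.
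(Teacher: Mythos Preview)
Your argument is correct and is precisely the induction the paper has in mind when it says ``This Lemma with the definition of $\hat{\eta}(w,\epsilon)$ then leads to the following Proposition.'' The paper does not spell out the step, but your length induction, matching the two $\{0,1\}$-valued increments via Lemma \ref{lema2}, is exactly how that sentence unpacks.
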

This Proposition gives the proof of Theorem \ref{main}.  That is, we 
have $\mathcal{G}(k,n)=\mathsf{G}(k,n)$ by identifying
$\langle\sigma_w;\hat{\eta}(w,\epsilon_-)\rangle$ with $\langle Y_w;\eta(w)\rangle$.  Also, for $\mathsf{G}^*(k,n)$, we have the incidence graph $\mathcal{G}^*(k,n)$ corresponding
to a non-constant local system, i.e.
\[
\mathcal{G}^*(k,n):=\left\{(\langle\sigma_w;\hat{\eta}(w,\epsilon_+)\rangle,\Rightarrow):
\sigma_w\Rightarrow\sigma_{w'}~\text{if}~\hat{\eta}(w,\epsilon_+)=\hat{\eta}(w'\epsilon_+)\right\}.
\]

 As the simplest but important example (used in the proof as the first step), we give the incidence graph of $\Gr(1,n)\cong\Real P^{n-1}$ whose the longest element is $w_\circ=s_{n-1}\cdots s_2s_1$. The graph $\mathsf{G}(1,n)$ is given by
\[
\emptyset~\overset{s_1}{\longrightarrow}~\young[1][8][$q$]~\overset{s_2}{\Longrightarrow}~\young[2][8][$q$,$1$]~\overset{s_3}{\longrightarrow}~\young[3][8][$q$,$1$,$q$]\quad{\Longrightarrow}\quad\cdots
\]
Recall that the boxes marked by $s_m$ with odd $m$ has $q$ and others have $1$.
Then we recover the well-known results of the integral cohomology of $\Real P^{n-1}$,
\[
H^k(\Real P^{n-1};\mathbb{Z})=\left\{\begin{array}{lll}
\mathbb{Z}\quad&\text{if}\quad k=0~\text{or}~n-1=\text{odd}\\
\mathbb{Z}_2\quad&\text{if}\quad k=\text{even}\\
0\quad &\text{otherwise}\\
\end{array}\right.
\]
Also the graph $\mathsf{G}^*(1,n)$ is given by
\[
\emptyset~\overset{s_1}{\Longrightarrow}~\young[1][8][$1$]~\overset{s_2}{\longrightarrow}~\young[2][8][$1$,$q$]~\overset{s_3}{\Longrightarrow}~\young[3][8][$1$,$q$,$1$]\quad{\longrightarrow}\quad\cdots
\]
Then we have the cohomology with twisted coefficient $\mathcal{L}$,
\[
H^k(\Real P^{n-1};\mathcal{L})=\left\{\begin{array}{lll}
\mathbb{Z}\quad&\text{if}\quad k=n-1=\text{even}\\
\mathbb{Z}_2\quad&\text{if}\quad k=\text{odd}\\
0\quad &\text{otherwise}\\
\end{array}\right.
\]
As we will show in Section \ref{Poincare}, this cohomology is related to the
homology of the non-orientable case $\mathbb{R}P^{n-1}$ with $n=$odd
via the Poincar\'e-Verdier duality, i.e.
\[
H_k(\mathbb{R}P^{n-1};\mathbb{Z})=H^{n-1-k}(\mathbb{R}P^{n-1};\mathcal{L}).
\]

\section{The $\mathbb{F}_q$-points on $\Gr(k,n)$}

Let $\mathbb{F}_q$ be a finite field with $q$ elements. Here $q$  is a power of a prime.
Then based on the $q$-weighted Schubert variety defined in the previous section, one can find the number of $\mathbb{F}_q$ points
on $\Gr(k,n)$.  We first recall that in the complex case, the $\mathbb{F}_q$-points of $\Gr(k,n,\mathbb{C})$ is given by
\begin{equation}\label{Qnumber}
|\Gr(k,n,\mathbb{F}_q)|:=\sum_{w\in S_n^{(k)}}q^{\ell(w)}=\left[\begin{matrix}n\\k\end{matrix}\right]_q:=\frac{[n]_q!}{[k]_q![n-k]_q!}
\end{equation}
where $[k]_q!:=[k]_q[k-1]_q\cdots[1]_q$ and $[k]_q$ is the $q$-analog of $k$,
\[
[k]_q:=\frac{1-q^k}{1-q}=1+q+\cdots+q^{k-1},
\]
Then the $t^2$-binomial coefficient $\left[\begin{matrix}n\\k\end{matrix}\right]_{t^2}$ gives the Poincar\'e polynomial $P^{\mathbb{C}}_{(k,n)}(t)$ of $\Gr(k,n;\mathbb{C})$.  


\subsection{The number of $\mathbb{F}_q$-points on $\Gr(k,n)$}\label{Fqpoints}

The real Grassmanian  $\Gr(k,n)$ can then be described as $K/ L\cap K$, where $K={\rm SO}_n(\Real)$  and
the Levi factor $L$ of a maximal parabolic subgroup $P=P_k$
containing the Borel subgroup $B$  of upper triangular matrices in $\SL_n({\mathbb R})$. We can consider  $K/ L\cap K$
over different fields such as the algebraically closed fields fields such as  ${\mathbb C}, \overline {\mathbb F}_q$ or the finite field $ {\mathbb F}_q$.
 This allows us to consider  
$\Gr(k,n)_{\mathbb{F}_q}$ and the number of  points  $|\Gr(k,n)_{\mathbb{F}_q}|$.  We refer to as the $\mathbb{F}_q$-points on $\Gr(k,n)$. Here we give an explicit formula
of the $\mathbb{F}_q$-points
in terms of $\eta(w)$ describing the number of $q$'s in the Young diagram $Y_w$.
Let us first give:
\begin{Definition}\cite[Definition 3.1]{casian:06}\label{polyQ}
We define a polynomial $p(q)$ by
\[
p(q)=(-1)^{k(n-k)}\sum_{w\in S_n^{(k)}}(-1)^{\ell(w)}q^{{\eta(w)}}.
\]
\end{Definition}
Then from tyne incidence graph $\mathsf{G}(k,n)$, one can find the explicit formula of the polynomial $p(q)$.
\begin{Proposition}
For $\Gr(k,n)$, the polynomial $p(q)$ takes the following form,
\begin{itemize}
\item[(i)] if $(k,n)$ equals $(2j,2m), (2j,2m+1)$ or $(2j+1,2m+1)$, then
\[
p(q)=\left[\begin{matrix}m\\j\end{matrix}\right]_{q^2},
\]
\item[(ii)] if $(k,n)=(2j+1,2m)$, then
\[
p(q)=(q^m-1)\left[\begin{matrix}m-1\\j\end{matrix}\right]_{q^2}.
\]
\end{itemize}.
\end{Proposition}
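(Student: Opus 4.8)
The plan is to turn $p(q)$ into a signed generating function over partitions and evaluate it by a recursion in $k$ that mirrors the $q^2$-Pascal identity. Identifying $w\in S_n^{(k)}$ with its partition $\lambda=(\lambda_1\le\cdots\le\lambda_k)$, $\lambda_j=w(j)-j$, sitting inside the $k\times(n-k)$ rectangle, one has $\ell(w)=|\lambda|$ and, by Lemma \ref{etaformula}, $\eta(w)=\sum_{j=1}^k\lfloor(\lambda_j+\sigma(j))/2\rfloor$, where the $j$-th summand is exactly the number of $q$-boxes in the $j$-th row of the checkered filling. Thus, writing $S(k,n):=\sum_\lambda(-1)^{|\lambda|}q^{\eta(\lambda)}$ and the shifted analogue $S^*(k,n):=\sum_\lambda(-1)^{|\lambda|}q^{\eta^*(\lambda)}$, we have $p(q)=(-1)^{k(n-k)}S(k,n)$, so it suffices to evaluate $S(k,n)$.

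Next I would peel off the top (longest) row. Splitting the sum according to the length $m$ of the top row, $0\le m\le n-k$, the remaining rows form an arbitrary partition in the $(k-1)\times m$ box, i.e.\ they range over $\Gr(k-1,k-1+m)$; moreover, since the filling of these rows is read off with the parity of $k$ rather than $k-1$, the inherited coloring is $q$-shifted. This is precisely the decomposition used in the proof of Lemma \ref{lema2}. Pulling the top-row weight out of the sum gives the coupled recursions
\[
S(k,n)=\sum_{m=0}^{n-k}(-1)^m q^{\lceil m/2\rceil}S^*(k-1,k-1+m),\qquad
S^*(k,n)=\sum_{m=0}^{n-k}(-1)^m q^{\lfloor m/2\rfloor}S(k-1,k-1+m),
\]
with base values $S(0,n)=S^*(0,n)=1$; the text's computation of $\Gr(1,n)\cong\Real P^{n-1}$ supplies the $k=1$ step directly.

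Finally I would iterate these once to reduce $k$ to $k-2$, matching the $q^2$ structure of the answer (the two steps are needed because the $S^*$ values are not themselves $q^2$-binomials, e.g.\ $S^*(2,4)=2q$, and must be eliminated). After interchanging the two summations, the inner sum becomes a truncated alternating sum $\sum_{m}(-1)^m q^{\lceil m/2\rceil}$, whose partial sums telescope to the closed forms $1$ for an even upper limit and $1-q^{T+1}$ for an odd upper limit $2T+1$. Feeding these into the reduced recursion produces a relation for $S(k,n)$ in terms of $S(k-2,\cdot)$, which I would then compare, case by case in the parities of $k$ and $n$, with the $q^2$-Pascal identity $\left[\begin{matrix}\bar n\\\bar k\end{matrix}\right]_{q^2}=\left[\begin{matrix}\bar n-1\\\bar k-1\end{matrix}\right]_{q^2}+q^{2\bar k}\left[\begin{matrix}\bar n-1\\\bar k\end{matrix}\right]_{q^2}$, where $\bar k=\lfloor k/2\rfloor$ and $\bar n=\lfloor n/2\rfloor$. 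The induction then yields $p(q)=\left[\begin{matrix}\bar n\\\bar k\end{matrix}\right]_{q^2}$ in the three cases where $k$ is even or $n$ is odd, while in the case $(k,n)=(2j+1,2m)$ the extra factor $q^{\bar n}-1=q^m-1$ appears precisely as the nonvanishing boundary residue $1-q^{T+1}$ of the incomplete telescope.

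The step I expect to be the main obstacle is exactly this last parity bookkeeping: the floor and ceiling in the two recursions are genuinely different, so the telescoping leaves different residues according to the parities of $k$ and $n$, and one must verify that the irregular intermediate values of $S^*$ collapse correctly after composition. Keeping this organized is what forces the four-case split in the statement, with the single exceptional case $(2j+1,2m)$ carrying the factor $q^m-1$.
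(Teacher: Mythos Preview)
Your plan is sound and would yield a correct proof, but it follows a different route from the paper's argument. The paper proceeds by a direct cancellation: it identifies the Young diagrams that survive the alternating sum as exactly those built entirely from $2\times 2$ blocks $\young[2,2][6][$q$,$1$,$1$,$q$]$ (together with the full hook $(n-k)\times 1^{k-1}$ in the exceptional case $(2j+1,2m)$), arguing via the incidence graph $\mathsf{G}(k,n)$ that every other diagram is connected by an edge $\Rightarrow$ along which $\eta$ is constant and hence cancels with a neighbor of opposite $(-1)^{\ell(w)}$. The surviving block diagrams are then in transparent bijection with the cells of $\Gr(j,m,\mathbb{C})$, each block contributing weight $q^2$, which gives the $q^2$-binomial immediately; the hook contributes the extra $q^m$ term in case (ii).

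Your approach instead unwinds the sum recursively by stripping the top row, producing the coupled $S$/$S^*$ recursions, and then recovering the $q^2$-Pascal identity after a two-step iteration and a telescoping sum. This is more computational but has the advantage of being entirely self-contained: it does not appeal to the structure of $\mathsf{G}(k,n)$ or to the implicit sign-reversing involution that the paper leaves unspecified (``One can also show that those are only ones which contribute the sum''). The paper's argument, on the other hand, explains \emph{why} the answer is a $q^2$-binomial---the $2\times 2$ blocks literally model a half-sized complex Grassmannian---and makes the geometric origin of the factor $q^m-1$ visible as the weight of a single hook diagram, whereas in your approach it emerges as the residue $1-q^{T+1}$ of an incomplete telescope.
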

\begin{proof}  
Let $Y_w$ be a Young diagram satisfying the following condition:
\begin{itemize}
\item[(i)] If $(k,n)$ equals $(2j,2m), (2j,2m+1)$ or $(2j+1,2m+1)$, then 
 $Y_w$ consists of only the sub-diagram given by
\[
\young[2,2][8][$q$,$1$,$1$,$q$].
\]
\item[(ii)] If $(k,n)=(2j+1,2m)$, $Y_w$
consists of the above sub-diagram {\it{and}} the hook diagram, $Y_w=(n-k)\times1^{k-1}$ with $w=s_1s_2\cdots s_{k-1}s_ns_{n-1}\cdots s_k$, e.g. for $\Gr(3,8)$,
$w=s_1s_2s_7s_6\cdots s_3$ and the Young diagram is given by $Y=5\times 1^2$,
\[
\young[5,1,1][8][$q$,$1$,$q$,$1$,$q$,$1$,$q$].
\]
The number of $q$'s in this hook diagram is $m$, i.e. it has the weight $q^m$,
and the length (total number of boxes) of the diagram is $n-1=2m-1$.
\end{itemize}
Then for those Young diagrams in the graph $\mathsf{G}(k,n)$, the arrows coming in or going out are just $\rightarrow$, not $\Rightarrow$.  That is, $\eta(w)$ 
in the $q$-weighted Schubert variety $\langle Y_w:\eta(w)\rangle$ changes under those nontrivial $W$-actions, and those $w$ contribute the sum.
 One can also show that those are only ones which contribute the sum.  Then for example, in the case $(k,n)=(2j,2m)$ in the case (i), one can see that the polynomial $p(q)$ agrees with $|\Gr(j,m,\mathbb{F}_{q^2})|$ in \eqref{Qnumber}, which is the formula in (i).  Other cases in (i) can be obtained in the similar manner.

 For the case (ii), i.e. $(k,n)=(2j+1,2m)$, there are two types of Young diagrams
 which contribute the sum $p(q)$. The Young diagrams having
just  type (i) give the second term  $-\left[\begin{matrix}m-1\\j\end{matrix}\right]_{q^2}$ in $p(q)$ ($-$ sign is due to $(-1)^{k(n-k)}$), and 
those with the hook diagram give $q^m\left[\begin{matrix}m-1\\j\end{matrix}\right]_{q^2}$, the first term in $p(q)$.
\end{proof}

One can also define the polynomial $p^*(q)$ using $\eta^*(w)$ by
\[
p^*(q)=(-1)^{k(n-k)}\sum_{w\in S_n^{(k)}}(-1)^{\ell(w)}q^{{\eta^*(w)}}.
\]
This polynomial will be important for the non-orientable case of $\Gr(k,n)$
with $n=$odd, and we have:
\begin{Proposition}\label{q-poly}
For $\Gr(k,2m+1)$, the polynomial $p^*(q)$ takes the form,
\[
p^*(q)=q^s\left[\begin{matrix}m\\j\end{matrix}\right]_{q^2}=q^s\,p(q),
\]
where $s=j$ if $k=2j$, and $s=m-j$ if $k=2j+1$.
\end{Proposition}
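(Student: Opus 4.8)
The plan is to reduce Proposition~\ref{q-poly} to the formula $p(q)=\left[\begin{matrix}m\\j\end{matrix}\right]_{q^2}$ already established in the previous Proposition (for $n=2m+1$ odd, both $k=2j$ and $k=2j+1$ fall under its case~(i)), by establishing a reciprocity relation tying $p^*$ to $p$. The starting point is the elementary remark that the \emph{standard} and $q$-\emph{shifted} fillings place the letter $q$ in complementary boxes of a given $Y_w$, so that $\eta(w)+\eta^*(w)=\ell(w)$. This already rewrites the sum defining $p^*$ with exponent $\ell(w)-\eta(w)$, but to turn that exponent into a multiple of $\eta(w)$ I will exploit a complementation symmetry of $S_n^{(k)}$.

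Concretely, I would introduce the involution $w\mapsto\tilde w$ sending $Y_w$ to the $180^{\circ}$ rotation of its complement inside the $k\times(n-k)$ rectangle $Y_{w_\circ}$, for which $\ell(\tilde w)=k(n-k)-\ell(w)$. The heart of the argument is a parity lemma: writing $Q$ for the number of $q$'s in the standard filling of the whole rectangle, I claim that when $n$ is odd
\[
\eta(w)+\eta^*(\tilde w)=Q.
\]
The proof tracks the label $s_{j+c-1}$ attached to the box in row $j$ and column $c$ (read off from the labeling convention introduced earlier) under the rotation $(j,c)\mapsto(k+1-j,\,n-k+1-c)$. A short congruence check shows that the standard filling of the rectangle, restricted to the complement of $Y_w$, coincides with the standard filling of $Y_{\tilde w}$ when $n$ is even, but is its exact opposite, namely the $q$-shifted filling, when $n$ is odd. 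Hence the $Q-\eta(w)$ complement boxes carrying $q$ are exactly the $\eta^*(\tilde w)$ shifted $q$'s of $Y_{\tilde w}$. This bookkeeping is the main obstacle, since one must verify both that the convention ``box $(j,c)$ carries $s_{j+c-1}$'' is respected by the rotation and that the northwest corner of $Y_{\tilde w}$ receives the correct letter.

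Granting this, substituting $w\mapsto\tilde w$ in the defining sum for $p^*$ and inserting $\ell(\tilde w)=k(n-k)-\ell(w)$ together with $\eta^*(\tilde w)=Q-\eta(w)$ yields
\[
p^*(q)=(-1)^{k(n-k)}\,q^{Q}\,p(1/q).
\]
Because $n$ is odd, exactly one of $k$ and $n-k$ is even, so the rectangle's two-coloring is balanced, giving $Q=\tfrac12 k(n-k)$ and $(-1)^{k(n-k)}=1$. Finally, $p(q)=\left[\begin{matrix}m\\j\end{matrix}\right]_{q^2}$ is palindromic of degree $2j(m-j)$, whence $p(1/q)=q^{-2j(m-j)}p(q)$ and $p^*(q)=q^{s}p(q)$ with $s=\tfrac12 k(n-k)-2j(m-j)$. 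Evaluating this in the two cases gives $s=j$ for $k=2j$ and $s=m-j$ for $k=2j+1$, which is the asserted formula.
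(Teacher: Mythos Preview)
The paper states Proposition~\ref{q-poly} without proof, so there is no argument to compare yours against directly. Your proof is correct and self-contained. The key steps all check out: since the standard and $q$-shifted fillings are complementary on each box, $\eta(w)+\eta^*(w)=\ell(w)$; the box $(j,c)$ carries the label $s_{j+c-1}$, and under $(j,c)\mapsto(k{+}1{-}j,\,n{-}k{+}1{-}c)$ the label parity changes by $n{+}2-2(j{+}c)\equiv n\pmod 2$, so for odd $n$ the standard filling of the complement rotates to the $q$-shifted filling of $Y_{\tilde w}$, giving $\eta^*(\tilde w)=Q-\eta(w)$. Substituting $\tilde w$ for $w$ then yields $p^*(q)=(-1)^{k(n-k)}q^{Q}p(1/q)$ exactly as you write. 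For $n=2m+1$ one of $k,\,n-k$ is even, so $k(n-k)$ is even and the checkerboard is balanced with $Q=\tfrac12 k(n-k)$; the palindromicity $p(1/q)=q^{-2j(m-j)}p(q)$ of the Gaussian binomial then gives $s=Q-2j(m-j)$, which evaluates to $j$ and $m-j$ in the two cases.

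If the authors had supplied a proof in the spirit of the preceding Proposition, one would expect a direct enumeration of the Young diagrams surviving the alternating sum for the $q$-shifted arrangement (square blocks plus possibly a hook). Your reciprocity argument is more conceptual: it derives $p^*$ from $p$ via the complementation symmetry rather than repeating a case analysis, and it makes transparent why the odd-$n$ hypothesis enters (it is exactly the condition for the rotation to exchange the two colorings). One minor point worth stating explicitly in a write-up is that $w\mapsto\tilde w$ is a bijection of $S_n^{(k)}$ onto itself; this is the standard duality $w\mapsto w_\circ w w_P$ (with $w_P$ the longest element of the parabolic), but in your Young-diagram language it is simply that rotating the complement of a subdiagram of the $k\times(n-k)$ rectangle again yields a subdiagram.
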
 


We then have the following theorem showing that $p(q)$ is related to the $\mathbb{F}_q$ points on $\Gr(k,n)$:
\begin{Theorem}  \label{points}
We assume $\sqrt{-1}\in\mathbb{F}_q$. Then we have
\[
|\Gr(k,n)_{\mathbb{F}_q}|= q^{r}p(q)\quad\text{with}\quad r=k(n-k)-{\rm deg}(p(q)).
\]
\end{Theorem}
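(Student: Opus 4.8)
The plan is to compute $|\Gr(k,n)_{\mathbb{F}_q}|$ directly from the homogeneous-space description $\Gr(k,n)=\SO_n/(L\cap K)$ with $L\cap K=S(O_k\times O_{n-k})$, and then match the answer against the explicit formula for $p(q)$ proved in the preceding Proposition. First I would pin down the $\mathbb{F}_q$-points concretely. Because $\sqrt{-1}\in\mathbb{F}_q$, the standard form $\sum_{i=1}^n x_i^2$ is a sum of hyperbolic planes (plus one extra square when $n$ is odd), hence split; so $\SO_n$ is the split group of type $D_{m}$ (for $n=2m$) or $B_{m}$ (for $n=2m+1$), with the usual point counts $|\SO_{2a}^{\pm}(\mathbb{F}_q)|=q^{a(a-1)}(q^a\mp1)\prod_{i=1}^{a-1}(q^{2i}-1)$ and $|\SO_{2a+1}(\mathbb{F}_q)|=q^{a^2}\prod_{i=1}^a(q^{2i}-1)$. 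Over $\overline{\mathbb{F}}_q$ the quotient $\SO_n/S(O_k\times O_{n-k})$ is precisely the open $\SO_n$-orbit of $k$-subspaces $V$ on which the form is nondegenerate (so that $V\oplus V^\perp=\mathbb{F}_q^n$). Hence $\Gr(k,n)_{\mathbb{F}_q}$ is the set of nondegenerate $\mathbb{F}_q$-rational $k$-subspaces of $(\mathbb{F}_q^n,\sum x_i^2)$, a smooth variety of the expected dimension $\binom n2-\binom k2-\binom{n-k}2=k(n-k)$.

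Next I would count these by isometry type. By Witt's theorem two nondegenerate $k$-subspaces lie in the same $\SO_n(\mathbb{F}_q)$-orbit iff they are isometric, and by Lang's theorem each $\mathbb{F}_q$-point is rational inside its orbit; the key subtlety is that there is generically \emph{more than one} orbit, so the naive ratio $|\SO_n(\mathbb{F}_q)|/|S(O_k\times O_{n-k})(\mathbb{F}_q)|$ is \emph{not} the answer. A nondegenerate $k$-subspace is determined up to isometry by its discriminant, giving two admissible types (``split'' and ``non-split''), both of which occur, with the type of $V^\perp$ forced so that $V\perp V^\perp$ is the standard form. Thus
\[
|\Gr(k,n)_{\mathbb{F}_q}|=\sum_{\tau}\frac{|\SO_n(\mathbb{F}_q)|}{|S(O_k\times O_{n-k})_\tau(\mathbb{F}_q)|},
\]
the sum running over the (at most two) types $\tau$, each stabilizer having order $2\,|\SO_k^{\pm}(\mathbb{F}_q)|\,|\SO_{n-k}^{\mp}(\mathbb{F}_q)|$ read off from the corresponding split/non-split orthogonal factors.

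I would then carry out the simplification in the four parity cases $(k,n)=(2j,2m),(2j,2m+1),(2j+1,2m+1),(2j+1,2m)$. Inserting the orders above, the two-term sum collapses through the elementary identity
\[
\frac{1}{(q^a-1)(q^b-1)}+\frac{1}{(q^a+1)(q^b+1)}=\frac{2(q^{a+b}+1)}{(q^{2a}-1)(q^{2b}-1)},
\]
after which the products $\prod_i(q^{2i}-1)$ telescope into a Gaussian binomial in base $q^2$. In the three cases of type (i) this produces $q^{2j(m-j)}\left[\begin{smallmatrix}m\\j\end{smallmatrix}\right]_{q^2}$ (in the odd-dimensional cases only the odd factor, whose order is discriminant-independent, shifts the power of $q$), while in case (ii) the two equal-stabilizer orbits give $|\SO_{2m}^+(\mathbb{F}_q)|/\big(|\SO_{2j+1}(\mathbb{F}_q)|\,|\SO_{2(m-j)-1}(\mathbb{F}_q)|\big)$, which simplifies to $q\cdot(q^m-1)\left[\begin{smallmatrix}m-1\\j\end{smallmatrix}\right]_{q^2}$-type form. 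Comparing with the previous Proposition and computing $\deg p(q)$ — namely $2j(m-j)$ in case (i) and $m+2j(m-1-j)$ in case (ii) — shows in each case that the accumulated prefactor is exactly $q^{k(n-k)-\deg p(q)}=q^{r}$, which finishes the argument. (As a consistency check, $\Gr(2,4)$ gives $450+200=650=q^2(q^2+1)$ at $q=5$, and $\Gr(1,3)$ gives $q^2$, both matching $q^rp(q)$.)

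The main obstacle is the orbit count itself: recognizing that $(\SO_n/H)(\mathbb{F}_q)$ is a union of several $\SO_n(\mathbb{F}_q)$-orbits, so one must sum $|\SO_n(\mathbb{F}_q)|/|H_\tau(\mathbb{F}_q)|$ over all rational isometry types $\tau$ rather than take a single ratio — this is precisely where the split/non-split $(\pm)$ structure enters and, via the displayed identity, where the base-$q^2$ Gaussian binomial originates. Getting the correct list of stabilizer forms is the delicate step; once the orbit decomposition and the finite orthogonal group orders are in hand, the parity bookkeeping and the verification that the power of $q$ equals $r=k(n-k)-\deg p(q)$ reduce to routine algebra.
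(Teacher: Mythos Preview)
Your argument is correct, but it proceeds along an entirely different route from the paper. The paper's proof is cohomological: it invokes the Lefschetz fixed-point formula for Frobenius on \'etale cohomology, uses Proposition~\ref{powersq} to identify the alternating Frobenius trace on $H^*(\Gr(k,n)_{\overline{\mathbb F}_q};\mathbb Q_l)$ with $(-1)^{k(n-k)}p(q)$, and then passes to $H_c^*$ via Poincar\'e duality (or Poincar\'e--Verdier duality with the twisted system $\mathcal L$ when $n$ is odd), which on the polynomial side amounts to the substitution $q\mapsto q^{-1}$ followed by the shift $q^{k(n-k)}$; the palindromic symmetry $q^{D}p(q^{-1})=(-1)^{k(n-k)}p(q)$ then yields $q^{r}p(q)$. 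Your approach bypasses all of this: you realize $\Gr(k,n)_{\mathbb F_q}$ concretely as the variety of nondegenerate $k$-planes in the split quadratic space $(\mathbb F_q^n,\sum x_i^2)$, decompose its $\mathbb F_q$-points into $\SO_n(\mathbb F_q)$-orbits via Witt's theorem, and evaluate each orbit as a ratio of finite orthogonal group orders, collapsing the two-term sums with the displayed identity. What you gain is a fully elementary, self-contained count that does not rely on Proposition~\ref{powersq} or on \'etale machinery. What the paper's argument gains is a conceptual explanation of \emph{why} the combinatorially defined $p(q)$ governs the point count --- namely because $\eta(w)$ encodes Frobenius weights --- and a template that extends uniformly to other real flag manifolds; your verification, by contrast, is inherently case-by-case and depends on already knowing the closed form of $p(q)$ from the preceding Proposition.
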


In the following subsection, we give an outline of the proof based on
the action of the Hecke algebra and the Frobenius eigenvalues in etale
cohomology (the Weil \'etale cohomology theory).

\begin{Example} Let us some explicit examples:

\begin{itemize}
\item[(a)]$ \quad|\Gr(1,2m)_{\mathbb{F}_q}|=|(\mathbb{R}P^{2m-1})_{\mathbb{F}_q}|=q^{m-1}(q^m-1),$\item[(b)]$\quad
|\Gr(1,2m+1)_{\mathbb{F}_q}|=|(\mathbb{R}P^{2m})_{\mathbb{F}_q}|=q^{2m},$
\item[(c)] $\quad|\Gr(2,4)_{\mathbb{F}_q}|=q^2(1+q^2),$
\item[(d)]$\quad|\Gr(3,6)_{\mathbb{F}_q}|=q^4(1+q^2)(q^3-1).$
\end{itemize}
\end{Example}

\subsection{Outline of the proof of Theorem \ref{points}}
We recall that the number of points  $| \Gr(k,n)_{{\mathbb{F}}_q} |$ is given by a Lefshetz fixed-point Theorem applied to  the Frobenius map, $Fr:x\to x^q$ on
$\Gr(k,n)_{\bar{\mathbb{F}}_q}$ with the algebraically closed field $\bar{\mathbb{F}}_q$.  That is, we have the formula relating $| \Gr(k,n)_{{\mathbb{F}}_q} |$ to the Frobenius eigenvalues,
\[
\sum_{s=0}^{k(n-k)}(-1)^s {\rm Tr}\left((Fr) |_{H_c^s( \Gr(k,n)_{\overline{\mathbb{F}}_q}
; {\mathbb Q}_l)}\right)=| \Gr(k,n)_{{\mathbb{F}}_q} |,
\]
where $H_c^s(X,\mathbb{Q}_l)$ is the \'etale cohomology with compact support
of $X$ with the values in the $l$-adic number field $\mathbb{Q}_l$.

We also have the following Proposition which corresponds to Proposition 6.1 in \cite{casian:06}:
\begin{Proposition}\label{powersq} The induced action of the Frobenius map on  $H^s (\Gr(k,n)_{\bar {\mathbb F}_q},  {\mathbb Q}_l)$ with the $l$-adic number field ${\mathbb{Q}}_l$ has the eigenvalues
of the form $q^i$ where $i$ is an integer.
\end{Proposition}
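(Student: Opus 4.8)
The plan is to reduce Proposition \ref{powersq} to the statement that the étale cohomology of the geometric model of $\Gr(k,n)$ over $\bar{\mathbb{F}}_q$ is of \emph{Tate type}, i.e.\ that every Frobenius eigenvalue is an integer power of $q$. First I would fix the model used throughout this section, $\Gr(k,n)_{\bar{\mathbb{F}}_q}=\SO_n/(\SO_n\cap L)$, a smooth variety defined over $\mathbb{F}_q$, where the hypothesis $\sqrt{-1}\in\mathbb{F}_q$ guarantees that the orthogonal group and its symmetric subgroup $\SO_n\cap L\cong S(O_k\times O_{n-k})$ are \emph{split} over $\mathbb{F}_q$. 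Splitness is essential: it forces each geometric stratum appearing below to be individually Frobenius-stable and defined over $\mathbb{F}_q$, so that no cyclic permutation of equidimensional strata can introduce a root-of-unity factor and spoil the conclusion that eigenvalues are pure powers of $q$.

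The base case is $\Gr(1,n)\cong\Real P^{n-1}$, which the model realizes as the complement $\mathbb{P}^{n-1}\setminus Q$ of a smooth \emph{split} quadric $Q$ (the locus of isotropic lines). Both $\mathbb{P}^{n-1}$ and a split smooth quadric have étale cohomology of Tate type, with all Frobenius eigenvalues of the form $q^i$. I would then run the Gysin/excision long exact sequence in compactly supported cohomology relating $H_c^*(\mathbb{P}^{n-1}\setminus Q)$, $H^*(\mathbb{P}^{n-1})$ and $H^*(Q)$: since the two outer terms are Tate, so is the middle one, and Poincar\'e duality (the model is smooth) transports this back to $H^s$. A direct check that the resulting eigenvalues reproduce the surviving weights recovers the two point-count formulas (a),(b) and matches the alternating structure of the graphs $\mathsf{G}(1,n)$ and $\mathsf{G}^*(1,n)$.

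For the inductive step I would imitate the decomposition already used in the proof of Lemma \ref{lema2}: the Schubert stratification of $\Gr(k,n)_{\bar{\mathbb{F}}_q}$ organizes into locally closed pieces that fiber, with affine-space fibers, over the lower models $\Gr(k-1,k+m-1)_{\bar{\mathbb{F}}_q}$ for $m=1,\dots,n-k$. Each such piece is defined over $\mathbb{F}_q$ by the splitness of the first step, its fibration has Tate cohomology by the inductive hypothesis together with the $\mathbb{A}^1$-invariance of étale cohomology for affine-space bundles (whose Frobenius action differs from that of the base by a Tate twist), and the long exact sequences attaching these pieces propagate Tate type to $H_c^*$ and hence, by Poincar\'e duality, to $H^s$ of the whole model. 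The precise exponents are bookkept by the Hecke algebra action of \cite{casian:06}, under which Frobenius acts on the class attached to $w$ as multiplication by $q^{\eta(w)}$; this is exactly the algebraic incarnation of the $q$-weighting $\langle Y_w;\eta(w)\rangle$ and is what makes the eigenvalues manifestly integer powers of $q$.

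The main obstacle is the inductive step, and specifically two points within it. One must verify over $\mathbb{F}_q$ (not merely over $\bar{\mathbb{F}}_q$) that the intermediate strata are Frobenius-stable split pieces — split quadrics and affine bundles over lower real Grassmannians — rather than Galois-twisted forms; this is precisely where $\sqrt{-1}\in\mathbb{F}_q$ is used, and it is the delicate input. The second, more technical, point is to match the cohomological weight of each surviving stratum with the combinatorial weight $\eta(w)$, so that the Lefschetz trace formula of this section reproduces $q^r p(q)$ as in Theorem \ref{points}; I would handle this through the Hecke-algebra computation of \cite{casian:06}, which already encodes the Frobenius as the relevant weight operator.
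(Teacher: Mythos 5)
The paper itself does not prove this Proposition: it is imported wholesale from Proposition 6.1 of \cite{casian:06}, so there is no in-paper argument to compare yours against. Judged on its own terms, your overall strategy --- show the model is of Tate type by stratifying it into Tate pieces, propagate through the long exact sequences for $H_c^*$, and return to $H^s$ by Poincar\'e duality on the smooth model --- is the standard and correct one, and your base case is sound: over $\bar{\mathbb F}_q$ the model of $\Gr(1,n)$ is $\mathbb{P}^{n-1}\setminus Q$ with $Q$ a smooth quadric, split because $\sqrt{-1}\in\mathbb{F}_q$, and the Gysin sequence together with the Tate property of $\mathbb{P}^{n-1}$ and of a split quadric gives the claim there (and does reproduce the point counts (a) and (b)).

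The genuine gap is in your inductive step. You assert that $\Gr(k,n)_{\bar{\mathbb F}_q}=\SO_n/S(O_k\times O_{n-k})$ carries a stratification by locally closed pieces that are affine-space bundles over the models of $\Gr(k-1,k+m-1)$, ``imitating'' the decomposition used in Lemma \ref{lema2}. But that decomposition is a combinatorial decomposition of the weak Bruhat graph, i.e.\ of the CW structure on the \emph{real points}; it is not a stratification of the variety $K/(L\cap K)$ over $\mathbb{F}_q$, and nothing supplies one. Concretely, fixing the largest pivot $w(k)=k+m$ corresponds to $W\subset F_{k+m}$, $W\not\subset F_{k+m-1}$, but the $(k-1)$-plane $W\cap F_{k+m-1}$ need not be non-degenerate for the quadratic form, so the would-be fibration does not land in a smaller real Grassmannian; and the fact that $|\Gr(k,n)_{\mathbb{F}_q}|=q^{r}p(q)$ is an \emph{alternating} sum over $S_n^{(k)}$ already rules out an affine paving indexed by the Schubert cells. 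Likewise the assertion that ``Frobenius acts on the class attached to $w$ by $q^{\eta(w)}$'' is the conclusion dressed up as an input. A workable repair is to stratify instead by the finitely many orbits of a split Borel subgroup on the symmetric variety $K/(L\cap K)$: by Rosenlicht each orbit is isomorphic to $\mathbb{A}^a\times\mathbb{G}_m^b$, the hypothesis $\sqrt{-1}\in\mathbb{F}_q$ makes these orbits and their $\mathbb{G}_m$-factors split over $\mathbb{F}_q$ (removing the possible $\pm q^i$ ambiguity from non-split tori), and then excision and duality give Tate type. That is presumably close to what \cite{casian:06} actually does, but it is a different decomposition from the one you describe.
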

By Definition \ref{polyQ} of the polynomial $p(q)$, Proposition \ref{powersq} then implies
\[
\sum_{s=0}^{k(n-k)}(-1)^s {\rm Tr}\left((Fr) |_{H^s( \Gr(k,n)_{\overline{\mathbb{F}}_q};
 {\mathbb Q}_l)}\right)=(-1)^{k(n-k)}p(q)
\]

Then to compute $|\Gr(k,n)_{\mathbb{F}_q}|$, we simply have to make a replacement in the  polynomial $p(q)$ corresponding to the {\em dual } of  $H^{s}_c( \Gr(k,n)_{\overline{\mathbb{F}}_q} ;  {\mathbb{Q}}_l)$.
Duality has the effect of replacing $q^{-1}$ instead of $q$ to take into account the action of Frobenius on the dual space. Then there is the additional  standard shift by $q^{k(n-k)}$ because of the Frobenius eigenvalues on  stalks of the coefficient
$\mathbb{V}$, the dual of a constant sheaf in the Poincar\'e duality formula. Thus we have $ q^{k(n-k)}p(q^{-1})$. These observations  leads to the proof of Theorem \ref{points}:
 
  We need to consider the cases with $n=$even (orientable case) and $n=$odd (non-orientable case):
\begin{enumerate}
\item Assume that $n$ is even.
By  the Poincar\'e duality, $H_c^{2k(n-k)-s}(\Gr(k,n)_{\overline{\mathbb{F}}_q} ; {\mathbb Q}_l)$ is the dual of $H^{s}( \Gr(k,n)_{\overline{\mathbb{F}}_q} ;  {\mathbb Q}_l )$.  
As has been already discussed above, this simply corresponds  to considering the polynomial  $ q^{k(n-k)}p(q^{-1})$.  

We now use standard properties of the  $q$ deformation of binomial coefficients.  
We note that  if $D={\rm deg}(p(q))$ then $q^Dp(q^{-1})=(-1)^{k(n-k)}p(q)$. We obtain
\[
\sum_{s=0}^{k(n-k)}(-1)^s {\rm Tr}\left((Fr)|_{H_c^s( \Gr(k,n)_{\overline{\mathbb{F}}_q};
 {\mathbb Q}_l)}\right)=| \Gr(k,n)_{{\mathbb {F}}_q} |=q^{k(n-k)-D}p(q).
\]

\item Assume that $n$ is odd, say $n=2m+1$.  There are two cases, i.e. $k=2j$ and $k=2j+1$.
By  the Poincar\'e-Verdier duality (see IX.4 and VI.3 in \cite{iversen}), $H_c^{2k(n-k)-s}(  \Gr(k,n)_{\overline{\mathbb{F}}_q} ;  {\mathbb Q}_l)$ is the dual of
$H^{s}( \Gr(k,n)_{\overline{\mathbb{F}}_q} ;  {\mathcal L} )$ where ${\mathcal L}$ is a twisted local system. 
We then have $|\Gr(k,n)_{\mathbb{F}_q}|=q^{k(n-k)}p^*(q^{-1})=q^{k(n-k)-D^*}p^*(q)$ with
$D^*=\text{deg}(p^*(q))$. Here the $q$-shifted polynomial
$p^*(q)$ are given in Proposition \ref{q-poly}, i.e.
$p^*(q)=q^sp(q)$ with $s=j$ if $k=2j$, and $s=m-j$ if $k=2j+1$.
Then we have $D^*=s+D$, and obtain
 $|\Gr(k,n)_{\mathbb{F}_q}|=q^{k(n-k)-D^*}p^*(q)=q^{k(n-k)-D}p(q)$.
\end{enumerate}

\section{The Poincar\'e polynomials}\label{Poincare}

 For $\Gr(k,n)$, the Poincar\'e polynomial, denoted by $P_{(k,n)}$, is given by
\[
P_{(k,n)}(t)=\sum_{i=1}^{k(n-k)}\text{dim}(H^i(\Gr(k,n),\Real))\,t^i.
\]
Here we show that  $P_{(k,n)}(t)$ can be obtained from the polynomial $p(q)$ defined in the previous section.
 \begin{Theorem} \label{B} 
 The Poincar\'e polynomial for $\Gr(k,n)$ is given as follows:
 \begin{itemize}
   \item [(i)]  If   $(k,n)$ equals $(2j, 2m), (2j, 2m+1)$ or $(2j+1, 2m+1)$, we have
   \[
   P_{(k,n)}(t)=p(t^2)= \left[\begin{matrix} m \\ j \end{matrix}\right]_{t^4} .
 \]
   \item [(ii)]  If $(k,n)=(2j+1,2m)$, we have
 \[
 P_{(k,n)}(t)= (t^{2m-1}+1 ) \left[\begin{matrix} m-1 \\ j \end{matrix}\right]_{t^4}.
 \]
 where $p(q)=(q^m-1)\left[\begin{matrix}m-1\\j\end{matrix}\right]_{q^2}$, that is, replace $(q^{m}-1)$
 by $(t^{2m-1}+1)$ and set $q=t^2$.
\end{itemize}  
  \end{Theorem}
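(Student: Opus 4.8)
The plan is to read off the real Betti numbers directly from the incidence graph $\mathsf{G}(k,n)$ of Theorem \ref{main}, and then to convert the $\eta$-grading, which controls $p(q)$, into the cohomological grading by $\ell$, which controls $P_{(k,n)}$. Over $\Real$ the only nonzero entries of the coboundary $\delta$ are the $\pm2$ attached to the edges $\Rightarrow$, and these are units; moreover every edge $\Rightarrow$ joins two vertices with the \emph{same} value of $\eta$, whereas the plain edges $\to$ carry incidence $0$. Hence $\delta$ preserves $\eta$, and the real cochain complex splits as a direct sum $\mathcal{C}^\bullet\otimes\Real=\bigoplus_{e}\mathcal{C}^\bullet_{e}$ over the values $e=\eta(w)$. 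Within each summand the vertices carrying no edge $\Rightarrow$ are isolated with zero differential and each contributes a free class in degree $\ell(w)$, while the remaining vertices form $\Rightarrow$-linked families that make up an $\Real$-acyclic subcomplex (the cohomological refinement of the pairwise cancellation already used to compute $p(q)$). Consequently
\[
P_{(k,n)}(t)=\sum_{w\text{ surviving}} t^{\ell(w)},
\]
the sum being over exactly those $w$ whose diagram $Y_w$ has no edge $\Rightarrow$, i.e. over precisely the diagrams singled out in the computation of $p(q)$.

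First I would record the degree relation for these surviving diagrams. By that computation each is built from $2\times2$ blocks, every such block carrying two $q$'s among its four boxes, possibly together with a single hook in the case $(k,n)=(2j+1,2m)$. A diagram tiled purely by $2\times2$ blocks has equal numbers of $q$'s and $1$'s, hence $\ell(w)=2\eta(w)$; the hook $(n-k)\times1^{k-1}$ with $n=2m$ has $2m-1$ boxes and $m$ $q$'s, and a hook-type surviving diagram (hook together with $2\times2$ blocks) always carries exactly one extra $q$, hence $\ell(w)=2\eta(w)-1$. Thus for a purely block-tiled $w$ one has $t^{\ell(w)}=(t^2)^{\eta(w)}$, while a hook-type $w$ gains an extra factor $t^{-1}$.

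In case (i) every surviving diagram is block-tiled, and since $k(n-k)$ is even and $\ell(w)=2\eta(w)$ is even, each appears in $p(q)$ with coefficient $+1$; therefore $p(q)=\sum_{w\text{ surviving}}q^{\eta(w)}$, and substituting $q=t^2$ turns $q^{\eta(w)}$ into $t^{2\eta(w)}=t^{\ell(w)}$, giving $P_{(k,n)}(t)=p(t^2)=\left[\begin{matrix}m\\j\end{matrix}\right]_{t^4}$ from the established value $p(q)=\left[\begin{matrix}m\\j\end{matrix}\right]_{q^2}$. In case (ii) the surviving diagrams split into the block-tiled family (contributing $\left[\begin{matrix}m-1\\j\end{matrix}\right]_{q^2}$ to $p$, with sign $-1$ since $k(n-k)$ is odd and $\ell$ is even) and the hook family (contributing $q^m\left[\begin{matrix}m-1\\j\end{matrix}\right]_{q^2}$, with sign $+1$ since $\ell$ is odd). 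For the Poincar\'e polynomial every surviving diagram contributes $+t^{\ell(w)}$: the block-tiled family gives $\left[\begin{matrix}m-1\\j\end{matrix}\right]_{t^4}$ and the hook family gives $t^{2m-1}\left[\begin{matrix}m-1\\j\end{matrix}\right]_{t^4}$, because $q^m\mapsto t^{2m-1}$ under $\ell=2\eta-1$ at $\eta=m$. Adding these yields $P_{(k,n)}(t)=(1+t^{2m-1})\left[\begin{matrix}m-1\\j\end{matrix}\right]_{t^4}$, which is exactly the stated recipe: replace $(q^m-1)$ by $(t^{2m-1}+1)$ and set $q=t^2$.

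The main obstacle is the cohomological input underlying the first displayed identity, namely that within each $\eta$-graded summand the $\Rightarrow$-linked vertices form an $\Real$-acyclic complex, so that the surviving vertices account for \emph{all} of $H^\bullet(\Gr(k,n);\Real)$ and not merely for its Euler characteristic. I expect this to follow from the local product structure of the incidence graph: each $\eta$-constant $\Rightarrow$-component should be a product of single edges, whose associated two-term complex with differential $\pm2$ is acyclic, so the component is acyclic by K\"unneth unless it degenerates to an isolated surviving vertex, exactly as one sees in $\mathsf{G}(2,5)$. Verifying this product structure uniformly, rather than in small examples, is the delicate step; everything else is the bookkeeping of signs and of the $\eta$-to-$\ell$ degree shift carried out above.
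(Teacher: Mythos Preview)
The paper does not actually supply a proof of Theorem~\ref{B}; it is stated and immediately followed by the remark relating $P_{(k,n)}$ to the complex Poincar\'e polynomial and by the Euler-characteristic corollary. So there is no argument in the paper to compare yours against, and your proposal is in effect an attempt to \emph{supply} the missing proof from the machinery of Sections~3 and~4.

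Your overall strategy is the right one and is clearly the one the authors have in mind: by Theorem~\ref{main} the integral coboundary has entries $0$ or $\pm2$, the nonzero ones occur exactly along $\Rightarrow$ edges, and since those edges preserve $\eta$ the real cochain complex splits as $\bigoplus_e\mathcal{C}^\bullet_e$. Your identification of the ``surviving'' vertices with the diagrams isolated in the proof of Proposition~4.1 is also correct (that proof says explicitly that these are precisely the $Y_w$ whose incoming and outgoing arrows are all $\rightarrow$, not $\Rightarrow$), and your $\eta$--to--$\ell$ bookkeeping ($\ell=2\eta$ on block-tiled diagrams, $\ell=2\eta-1$ on the hook family) checks out and reproduces the claimed formulas.

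The gap you yourself flag is genuine and is the whole content of the theorem beyond Proposition~4.1: you must show that within each $\eta$-level the subcomplex spanned by the non-surviving vertices is \emph{acyclic} over $\Real$, not merely that its contributions cancel in the alternating sum $p(q)$. Pairwise cancellation in $p(q)$ gives only vanishing Euler characteristic of each $\mathcal{C}^\bullet_e$ restricted to the non-surviving part; it does not by itself force vanishing cohomology in each degree. Your proposed mechanism---that each $\Rightarrow$-connected component is a product of single edges and hence acyclic by K\"unneth---is plausible in small examples (it is visibly true in $\mathsf{G}(2,5)$) but is not established in the paper and is not obvious in general; you would need to exhibit, uniformly in $(k,n)$, either that product structure or, equivalently, an acyclic matching on the $\Rightarrow$-edges (in the sense of discrete Morse theory) whose critical cells are exactly the surviving diagrams. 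Until that step is written down, the argument proves the Euler characteristic corollary but not the Poincar\'e polynomial.
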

Note in particular that for the case (i), we have 
  \[
  P_{(k,n)}(t)=P^{\mathbb{C}}_{(\lfloor{k}/{2}\rfloor,\lfloor{n}/{2}\rfloor)}(t^2).
  \]

These polynomials then lead to the well-known formulas of the Euler characteristic for $\Gr(k,n)$,
that is, we have:
\begin{Corollary}
The Euler characteristic $\chi_E(\Gr(k,n))$ has the following form.
\begin{itemize}
\item[(i)] If $(k,n)$ equals $(2j,2m), (2j,2m+1)$ or $(2j+1,2m+1)$, then
\[
\chi_E(\Gr(k,n))=P_{(k,n)}(-1)=\binom{m}{j}.
\]
\item[(ii)] If $(k,n)=(2j+1,2m)$, then
\[
\chi_E(\Gr(k,n))=P_{(k,n)}(-1)=0.
\]
\end{itemize}
\end{Corollary}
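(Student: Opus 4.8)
The plan is to obtain the Corollary as an immediate specialization of Theorem~\ref{B}: by definition the Euler characteristic is the alternating sum of Betti numbers, $\chi_E(\Gr(k,n)) = P_{(k,n)}(-1) = \sum_i (-1)^i \dim H^i(\Gr(k,n),\Real)$, so it suffices to substitute $t=-1$ into the two explicit formulas supplied by Theorem~\ref{B} and simplify.

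Before substituting, I would record the single elementary fact the computation rests on, namely the behaviour of the Gaussian binomial coefficient at argument $1$. Since $[k]_q = 1+q+\cdots+q^{k-1}$ specializes to $[k]_1 = k$, the $q$-factorials satisfy $[k]_1! = k!$, and hence
\[
\left[\begin{matrix} m \\ j \end{matrix}\right]_{1} = \frac{m!}{j!\,(m-j)!} = \binom{m}{j}.
\]
This is the only nontrivial ingredient; the remaining steps are direct evaluations.

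For case (i), Theorem~\ref{B} gives $P_{(k,n)}(t) = \left[\begin{matrix} m \\ j \end{matrix}\right]_{t^4}$. Setting $t=-1$ sends the deformation parameter $t^4$ to $(-1)^4 = 1$, so $\chi_E(\Gr(k,n)) = \left[\begin{matrix} m \\ j \end{matrix}\right]_{1} = \binom{m}{j}$ by the specialization above. For case (ii), Theorem~\ref{B} gives $P_{(k,n)}(t) = (t^{2m-1}+1)\left[\begin{matrix} m-1 \\ j \end{matrix}\right]_{t^4}$. Here $2m-1$ is odd, so the prefactor evaluates to $(-1)^{2m-1}+1 = -1+1 = 0$ at $t=-1$; since the surviving factor $\left[\begin{matrix} m-1 \\ j \end{matrix}\right]_{1} = \binom{m-1}{j}$ is finite, the product is $0$ and $\chi_E(\Gr(k,n)) = 0$.

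There is no genuine obstacle: the entire argument is the substitution $t=-1$ together with the $q\to 1$ limit of the $q$-binomial. The one point worth a sentence of care is the vanishing in case (ii), which comes from the parity of the exponent $2m-1$ in the factor $(t^{2m-1}+1)$; this is consistent with the fact that $\Gr(2j+1,2m)$ has odd dimension $(2j+1)(2m-2j-1)$, and a closed manifold of odd dimension necessarily has vanishing Euler characteristic by Poincar\'e duality, providing an independent sanity check on the formula.
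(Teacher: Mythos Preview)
Your proposal is correct and matches the paper's approach: the paper states the Corollary immediately after Theorem~\ref{B} with the remark that ``these polynomials then lead to the well-known formulas of the Euler characteristic,'' offering no further proof, so the intended argument is precisely the substitution $t=-1$ that you carry out. Your added details (the $q\to 1$ limit of the Gaussian binomial and the parity of $2m-1$) fill in exactly what the paper leaves implicit.
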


\subsection{The Poincar\'e-Verdier duality}
When the Grassmannian $\Gr(k,n)$ is orientable, $n$ is even, we have
the {\it Poincar\'e duality}
\[
H_j(\Gr(k,n),\mathbb{Z})=H^{k(n-k)-j}(\Gr(k,n),\mathbb{Z}).
\]
In the non-orientable case, i.e. $n$ is odd, 
we define the incidence graph with twisted coefficients $\mathcal{L}$,
denoted by $\mathsf{G}(k,n)^*$, that is, this graph is obtained by the Young diagrams with $q$-shifted arrangement.
The  graph then gives the cohomology of twisted coefficient,
\[
H^*(\Gr(k,n),\mathcal{L}).
\]
which then gives the homology of the non-orientable  Grassmannian $\Gr(k,n)$, via
the {\it Poincar\'e-Verdier duality} (see e.g. Section IX.4 and VI.3 in \cite{iversen}),
\[
H_j(\Gr(k,n),\mathbb{Z})=H^{k(n-k)-j}(\Gr(k,n),\mathcal{L}).
\]

\begin{Example}
The graph $\mathsf{G}(2,5)^*$ is given by
\[
\begin{matrix}
\emptyset  & {\Longrightarrow}  &     \young[1][8][$1$]      & {\longrightarrow}   &     \young[2][8][$1$,$q$]        &{\Longrightarrow} & \young[3][8][$1$,$
q$,$1$]    \\[-2.5ex]\\
             &                                &{\downarrow }    &                               &       {\downarrow } &                          &{\downarrow} \\[-2ex]\\
            &                               &    \young[1,1][8][$1$,$q$]        & {\longrightarrow }   &  \young[2,1][8][$1$,$q$,$q$]           & {\Longrightarrow} & \young[3,1][8][$1$,$q$,$1$,$q$]   \\[-2ex]  \\
            &                           &                                      &                                &  {\Downarrow}   &                              &{\Downarrow} \\[-2ex]\\
            &                            &                                   &                                &       \young[2,2][8][$1$,$q$,$q$,$1$]       &{\Longrightarrow} &\young[3,2][8][$1$,$q$,$1$,$q$,$1$] \\[-2ex]\\
            &                          &                                     &                               &                                        &                         &{\downarrow} \\[-2ex]\\
            &                          &                                  &                                  &                                       &                         & \young[3,3][8][$1$,$q$,$1$,$q$,$1$,$q$]
  \end{matrix}
\]
which is obtained by exchanging $q\leftrightarrow 1$ in the $q$-weighted Schubert varieties
in the incidence graph $\mathsf{G}(k,n)$.  The cohomology obtained from this graph is 
\[\begin{array}{llllll}
H^0(\Gr(2,5),\mathcal{L})=0, \hskip 1.5cm    & H^4(\Gr(2,5),\mathcal{L})=\mathbb{Z}_2, \\[1.0ex]
H^1(\Gr(2,5),\mathcal{L})=\mathbb{Z}_2,                                      & H^5(\Gr(2,5),\mathcal{L})=\mathbb{Z}_2, \\[1.0ex]
H^2(\Gr(2,5),\mathcal{L})=\mathbb{Z},                        &H^6(\Gr(2,5),\mathcal{L})=\mathbb{Z}.   \\[1.0ex]
H^3(\Gr(2,5),\mathcal{L})=\mathbb{Z}_2,                        & {}                                         
\end{array}
\]
Then the homology group $H_*(\Gr(2,5),\mathbb{Z})$ via
the Poincar\'e-Verdier duality gives
\[
H_j(\Gr(2,5),\mathbb{Z})=H^{6-j}(\Gr(2,5),\mathcal{L}) \quad \text{for}\quad j=0,1,\ldots,6.
\]
Notice that the homology generators are the Pontryagin classes in $H_0$ and $H_4$.
\end{Example}

\section{Conjecture on the ring structure of the cohomology}

Recall (see e.g. \cite{MS:74}) that the cohomology ring of the {\it complex} Grassmannian $\Gr(k,n,\mathbb{C})$ is given by
\[
H^*(\Gr(k,n,\mathbb{C}),\mathbb{R})\cong \frac{\mathbb{R}[c_1,\ldots,c_{n-k},\bar{c}_1,\ldots,\bar{c}_k]}{\{c\cdot\bar{c}=1\}},
\]
where $c=1+c_1+\cdots+c_{n-k}$ and $\bar{c}=1+\bar{c}_1+\cdots+\bar{c}_k$ with the Chern classes $c_j\in H^{2j}(\Gr(k,n,\mathbb{C},\mathbb{R})$.

\medskip

The cohomology ring of the classifying space $BO(k)=\Gr(k,\infty)$ is also known, and it is given by
\[
H^*(BO(k),\mathbb{R})\cong \mathbb{R}[p_1,\ldots,p_{\lfloor \frac{k}{2}\rfloor}],
\]
where the generators $p_j$ of the ring are given by the Pontryagin classes $p_j\in H^{4j}(BO(k),\mathbb{R})$.

Then it is natural to make the following conjectures:
\begin{Conjecture}
The homology ring $H^*(\Gr(k,n),\Real)$ is given by the following:
\begin{itemize}
\item[(i)] If $(k,n)$ equals $(2j,2m), (2j,2m+1)$ or $(2j+1,2m+1)$, then 
\[
H^*(\Gr(k,n),\mathbb{R})\cong \frac{\mathbb{R}[p_1,\ldots, p_{m-j},\bar{p}_1,\ldots,\bar{p}_j]}{\{p\cdot \bar{p}=1\}},
\]
where $p=1+p_1+\cdots+p_{m-j}$ and $\bar{p}=1+\bar{p}_1+\cdots+\bar{p}_j$ with the Pontryagin classes $p_j\in H^{4j}(\Gr(k,n),\mathbb{R})$.
\item[(ii)] If $(k,n)=(2j+1,2m+2)$, then
\[
H^*(\Gr(k,n),\mathbb{R})\cong \frac{\mathbb{R}[p_1,\ldots,p_{m-j},\bar{p}_1,\ldots,\bar{p}_j,r]}{\{p\cdot \bar{p}=1, r^2\}},
\]
where $p_j\in H^{4j}(\Gr(k,n),\mathbb{R})$ and the element $r$ corresponds to the Schubert class $\sigma_w$ with the hook diagram $Y_w=(n-k)\times 1^{k-1}$.
\end{itemize}
\end{Conjecture}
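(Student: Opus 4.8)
The plan is to realize the conjectured presentation as the subring generated by characteristic classes of the two tautological bundles, and then to promote a Hilbert-series match (supplied by Theorem \ref{B}) to a genuine ring isomorphism. Let $\gamma$ be the tautological rank-$k$ bundle over $\Gr(k,n)$ and $\gamma^{\perp}$ its rank-$(n-k)$ orthogonal complement, and set $\bar p_i=p_i(\gamma)$ and $p_i=p_i(\gamma^{\perp})$, so that $\bar p_i,p_i\in H^{4i}(\Gr(k,n),\Real)$. Because $\gamma\oplus\gamma^{\perp}$ is trivial and the total Pontryagin class is multiplicative modulo $2$-torsion, over $\Real$ one obtains the \emph{exact} relation $p\cdot\bar p=1$. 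Sending $p_i\mapsto p_i(\gamma^{\perp})$, $\bar p_i\mapsto p_i(\gamma)$ therefore defines a graded ring homomorphism $\phi\colon A\to H^{*}(\Gr(k,n),\Real)$ from the abstract algebra $A$ of the conjecture. In case (ii) I would additionally send the extra generator to the hook Schubert class $r=\sigma_w$ with $Y_w=(n-k)\times 1^{k-1}$; this class lies in $H^{n-1}$ with $n-1=2m+1$ odd, so $r^{2}=0$ holds automatically by graded-commutativity over $\Real$.

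First I would compute the Hilbert series of $A$ and compare it with Theorem \ref{B}. In case (i) the presentation $\Real[p_1,\dots,p_{m-j},\bar p_1,\dots,\bar p_j]/\{p\bar p=1\}$ is formally identical to the classical Chern-class presentation of $H^{*}(\Gr(j,m,\Complex),\Real)$, the only difference being that every generator now sits in degree $4i$ rather than $2i$. Hence its Hilbert series is obtained from the complex one by the substitution $t^{2}\mapsto t^{4}$, giving $\left[\begin{matrix} m \\ j \end{matrix}\right]_{t^{4}}$, which is exactly $P_{(k,n)}(t)$ in case (i); in case (ii) the exterior factor $\Real[r]/r^{2}$ multiplies this by $(1+t^{n-1})=(1+t^{2m+1})$, reproducing Theorem \ref{B}(ii). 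Granting this, a degreewise dimension count gives $\dim_{\Real}A^{i}=\dim_{\Real}H^{i}(\Gr(k,n),\Real)$ for every $i$, so $\phi$ is an isomorphism as soon as it is surjective (equivalently injective).

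Surjectivity --- that the Pontryagin classes, together with $r$ in case (ii), generate the full real cohomology ring, with no relations beyond $p\bar p=1$ (and $r^{2}=0$) --- is the crux. The natural tool is the fibration $\Gr(k,n)\to BO(k)\times BO(n-k)\to BO(n)$ and its Eilenberg--Moore (or Serre) spectral sequence with real coefficients. Since $H^{*}(BO(\ell),\Real)=\Real[p_1,\dots,p_{\lfloor \ell/2\rfloor}]$ is polynomial on Pontryagin classes and the map from the base is the Whitney product $p\mapsto p\cdot\bar p$, the computation becomes a Koszul complex. In case (i) the number $\lfloor n/2\rfloor$ of base generators coincides with the number $\lfloor k/2\rfloor+\lfloor (n-k)/2\rfloor$ of generators upstairs (one checks this equals $m$ in all three subcases); the images of $p_1,\dots,p_{\lfloor n/2\rfloor}$ form a regular sequence because the quotient is finite-dimensional, exactly as in the degree-doubled complex Grassmannian calculation, so the spectral sequence collapses onto the quotient algebra $A$. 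This is precisely the parallel of the classical computation for complex Grassmannians, and it simultaneously establishes generation and the absence of extra relations. (A more hands-on but harder alternative would be a direct real Schubert-calculus identification of $p_i,\bar p_i$ with specific Schubert classes in the incidence graph $\mathsf{G}(k,n)$.)

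The step I expect to be the main obstacle is case (ii), where $n=2m+2$ is even, so $\Gr(k,n)$ is orientable, the generator counts differ by one ($m$ upstairs versus $m+1$ on the base), and $\pi_{1}(BO(n))=\mathbb{Z}/2$ acts on the fibre. Indeed, a naive $BO$-level bookkeeping produces a spurious exterior class in degree $4m+3$ rather than the desired $r$ in degree $n-1=2m+1$, signalling that orientability must be used essentially. I would therefore treat this case through the oriented double cover $\pi\colon\widetilde{\Gr}(k,n)\to\Gr(k,n)$, via $H^{*}(\Gr(k,n),\Real)\cong H^{*}(\widetilde{\Gr}(k,n),\Real)^{\mathbb{Z}/2}$: the orientation of $\widetilde{\gamma}$ forces an Euler-type class whose $\mathbb{Z}/2$-invariant part is the hook class $r$ of degree $n-1$ (the model case being $\Gr(1,2m+2)=\Real P^{2m+1}$, where $r$ is the top class and the Pontryagin part collapses to $\Real$), while the invariants of the Pontryagin classes reproduce the case-(i) subalgebra. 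Verifying that this invariant subring is free of rank two over the Pontryagin subalgebra on the basis $\{1,r\}$, with no further relations, is the technical heart of the argument; once it is in place, the dimension count of the second paragraph upgrades $\phi$ to the claimed ring isomorphism.
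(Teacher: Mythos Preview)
The paper does not prove this statement: it is explicitly labelled a \emph{Conjecture} and the paper ends immediately after stating it, with no argument offered. There is therefore no ``paper's own proof'' to compare your attempt against.

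That said, a brief assessment of your outline on its own terms: your strategy for case~(i) --- build the map $\phi$ from Pontryagin classes of the tautological bundles, match Hilbert series via Theorem~\ref{B}, and then run the Eilenberg--Moore spectral sequence for $\Gr(k,n)\to BO(k)\times BO(n-k)\to BO(n)$ to get surjectivity --- is the standard route and, with the regular-sequence check you describe, should go through. Your Hilbert-series bookkeeping in both cases is correct.

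Case~(ii) is where your proposal remains genuinely incomplete, and you are candid about this. Two specific difficulties: first, when $(k,n)=(2j+1,2m+2)$ both $k$ and $n-k$ are odd, so on the oriented Grassmannian $\widetilde{\Gr}(k,n)$ neither tautological bundle carries an Euler class, and it is not clear what ``Euler-type class'' you intend to produce whose $\mathbb{Z}/2$-invariant part lands in degree $n-1=2m+1$; the model case $\Gr(1,2m+2)=\Real P^{2m+1}$ works because the top class is available by orientability of $\Gr$ itself, not via an Euler class of $\widetilde\gamma$. Second, even granting a candidate for $r$, showing that $H^{*}(\widetilde{\Gr}(k,n),\Real)^{\mathbb{Z}/2}$ is free of rank two over the Pontryagin subalgebra on $\{1,r\}$ is precisely the content of the conjecture in this case, so the argument as written risks circularity. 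Your outline is a reasonable plan of attack, but the hard step you flag is genuinely hard and not yet resolved --- which is consistent with the authors leaving the statement as a conjecture.
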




\bibliographystyle{amsalpha}

\end{document}